\theoremstyle{plain}
\newtheorem{mainthm}{Theorem}
\newtheorem*{conj*}{Conjecture}
\newtheorem*{cor*}{Corollary}
\newtheorem*{def*}{Definition}
\newtheorem{theorem}{Theorem}[section]
\newtheorem{proposition}[theorem]{Proposition}
\newtheorem{corollary}[theorem]{Corollary}
\newtheorem{lemma}[theorem]{Lemma}
\newtheorem{conj}[theorem]{Conjecture}
\theoremstyle{definition}
\newtheorem{definition}[theorem]{Definition}
\newtheorem{remark}[theorem]{Remark}
\newtheorem{q}{Question}
\newcommand{\Z}{\mathbb{Z}}
\newcommand{\N}{\mathbb{N}}
\newcommand{\eps}{\varepsilon}
\title{N-expansive homeomorphisms with the shadowing property}
\author[Bernardo Carvalho and Welington Cordeiro]{}
\thanks{2010 \emph{Mathematics Subject Classification}: Primary 37B99; Secondary 37D99.}
 \keywords{Expansive, $n$-expansive, shadowing, transitive, limit shadowing.}
 \email{bernardocarvalho@ufv.br}
 \email{wud11@psu.edu}
\begin{document}

\maketitle
\centerline{\scshape Bernardo Carvalho, Welington Cordeiro}
{\footnotesize
} 
\begin{abstract}{We discuss the dynamics of $n$-expansive homeomorphisms with the shadowing property defined on compact metric spaces. For every $n\in\N$, we exhibit an $n$-expansive homeomorphism, which is not $(n-1)$-expansive, has the shadowing property and admits an infinite number of chain-recurrent classes. We discuss some properties of the local stable (unstable) sets of $n$-expansive homeomorphisms with the shadowing property and use them to prove that some types of the limit shadowing property are present. This deals some direction to the problem of non-existence of topologically mixing $n$-expansive homeomorphisms that are not expansive.}
\end{abstract}



\bigskip


\section{Introduction and Statement of Results}\label{sec:intro}

The dynamics of expansive homeomorphisms with the shadowing property may be very complicated but it is quite well understood (see Aoki and Hiraide's monograph \cite{AH}). It is known that these systems admit only a finite number of chain recurrent classes and that other pseudo-orbit tracing properties are present, such as the limit shadowing property (see Section \ref{2} for definitions). A generalization of the expansiveness property that has been given attention recently is the $n$-expansive property (see \cite{Art}, \cite{APV}, \cite{LZ}, \cite{Mor}).

\begin{definition}
We say that a homeomorphism $f$, defined in a metric space $(X,d)$, is \emph{n-expansive} $(n\in\N)$ if there exists $c>0$ such that for every $x\in X$ the set $$\Gamma(x,c):=\{y\in X \,\,\, ; \,\,\, d(f^n(x),f^n(y))\leq c, \,\,\, n\in\Z\}$$ contains at most $n$ different points of $X$. The number $c$ is called the \emph{$n$-expansivity constant} of $f$.
\end{definition}

The expansive homeomorphisms are exactly the $1$-expansive ones. Roughly speaking, the $n$-expansive homeomorphisms may admit $n$ different orbits `moving together' but cannot admit $n+1$. In this paper, we analyze the dynamics of $n$-expansive homeomorphisms with the shadowing property defined on compact metric spaces. The first result that does not hold in this scenario is the Spectral Decomposition Theorem (Theorem 3.1.11 in \cite{AH}).

\begin{mainthm}\label{Example}
For every $n\in\N$, there is an $n$-expansive homeomorphism, defined in a compact metric space, that is not $(n-1)$-expansive, has the shadowing property and admits an infinite number of chain recurrent classes.
\end{mainthm}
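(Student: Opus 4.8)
The plan is to exhibit the homeomorphism by a direct construction: one hangs countably many small ``satellite'' periodic orbits onto a single well-understood model, so that each satellite both contributes exactly one extra point to suitable sets $\Gamma(\cdot,c)$ --- which forces $n$-expansiveness but not $(n-1)$-expansiveness --- and sits at a positive distance from everything else, which forces it to be a chain recurrent class on its own. Fix $n\in\N$. Let $(\Sigma,\sigma)$ be the full shift on $\{0,1\}^{\Z}$ with $d_{\Sigma}(x,y)=2^{-\min\{|i|:x_i\neq y_i\}}$; it is expansive, topologically mixing, has the shadowing property, and for every $k$ has a periodic point $\omega_k$ of least period $k$, so the orbits $O(\omega_k)$ are pairwise disjoint. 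Choose pairwise distinct reals $\ell_k^i\in(0,\tfrac{1}{3})$ $(k\geq 2,\ 1\leq i\leq n-1)$ with $\ell_k^i\to 0$, to be constrained further below, and put
\[
X=\bigl(\Sigma\times\{0\}\bigr)\cup\{(\sigma^m\omega_k,\ell_k^i):k\geq 2,\ 1\leq i\leq n-1,\ m\in\Z\}\subset\Sigma\times[0,1],
\]
with $d((x,s),(y,t))=d_{\Sigma}(x,y)+|s-t|$ and $f=(\sigma\times\mathrm{id})|_{X}$. Since $\ell_k^i\to 0$, the satellite orbits $C_k^i:=\{(\sigma^m\omega_k,\ell_k^i):m\in\Z\}$ can accumulate only on $\Sigma\times\{0\}$, so $X$ is compact; $f$ is a homeomorphism of $X$, each $C_k^i$ is a periodic orbit of period $k$, and $\pi(x,t)=x$ semiconjugates $f$ to $\sigma$.

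The three ``discrete'' requirements are then short computations with this two-coordinate metric. For $n$-expansiveness take $c=\tfrac{1}{2}$: if $(y,t)\in\Gamma((x,s),c)$, then $d_{\Sigma}(\sigma^mx,\sigma^my)\leq\tfrac{1}{2}$ for all $m\in\Z$, so $y=x$ by expansiveness of $\sigma$; hence $\Gamma((x,s),c)\subseteq\{(x,t):(x,t)\in X\}$, which has at most $n$ elements because $x$ lies on at most one of the disjoint orbits $O(\omega_k)$. For the failure of $(n-1)$-expansiveness: given $c'>0$, choose $k$ with $\ell_k^i<c'$ for all $i$; then $(\omega_k,0)$ and the $n-1$ points $(\omega_k,\ell_k^i)$ all lie in $\Gamma((\omega_k,0),c')$, since they share the $\Sigma$-coordinate. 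For the chain recurrent classes: each $C_k^i$ is finite and open in $X$ --- its points are isolated, as $\ell_k^i$ is isolated among the $[0,1]$-coordinates occurring in $X$ and $O(\omega_k)$ is finite --- hence lies at positive distance from $X\setminus C_k^i$; since $f$ preserves $C_k^i$ setwise, at sufficiently small scales no pseudo-orbit can enter or leave $C_k^i$, so $C_k^i$ is a chain recurrent class by itself, and there are infinitely many such (together with $\Sigma\times\{0\}$, which is chain transitive and, by the same isolation, chain-equivalent to none of the $C_k^i$).

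The step that needs genuine care is the shadowing property, because a priori a $\delta$-pseudo-orbit could drift through $\Sigma\times\{0\}$ and through infinitely many satellites at once. The resolution is an ultrametric-type separation of the satellites. Given $\varepsilon>0$, only finitely many $C_k^i$ have $\ell_k^i\geq\varepsilon/2$, and one chooses $\delta$ smaller than (i) the gaps among the finitely many numbers in $\{0\}\cup\{\ell_k^i:\ell_k^i\geq\varepsilon/4\}$, (ii) the least mutual distance inside each of those finitely many finite orbits, and (iii) the shadowing constant of $\sigma$ for the scale $\varepsilon/2$. Then every $\delta$-pseudo-orbit either stays inside a single ``large'' satellite --- where it is in fact an orbit and shadows itself --- or it meets only $\Sigma\times\{0\}$ together with satellites having $\ell_k^i<\varepsilon/2$, in which case $\pi$ sends it to a $\delta$-pseudo-orbit of $\sigma$; that one is $(\varepsilon/2)$-shadowed by some $w\in\Sigma$, and $(w,0)$ then $\varepsilon$-shadows the original pseudo-orbit in $X$. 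So the only real work lies in fixing the sequence $(\ell_k^i)$ at the outset spread out enough for this dichotomy; once that is done, all four conclusions follow. (For $n=1$ the family of satellites is empty, $X=\Sigma\times\{0\}$, and the finiteness of chain recurrent classes is just the classical Spectral Decomposition Theorem, so the substance of the statement lies in the cases $n\geq 2$.)
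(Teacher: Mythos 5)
Your construction is correct and takes essentially the same approach as the paper: both attach $n-1$ shrinking ``satellite'' copies of infinitely many periodic orbits of an expansive homeomorphism with the shadowing property, and verify the four properties via the same dichotomy (large satellites are isolated, so pseudo-orbits either stay in one or project to the base system). The only difference is cosmetic --- you realize the space concretely inside $\Sigma\times[0,1]$ with the sum metric instead of defining the metric on $M\cup E$ by hand --- and, like the paper's proof, yours really establishes the cases $n\geq 2$ (for $n=1$ the statement degenerates, as you observe).
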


This example also enlightens an important difference between the expansive homeomorphisms and the $n$-expansive ones: for expansive homeomorphisms there is some number $\eps>0$ such that for every $x\in X$, the local stable set of $x$ of size $\eps$ ($W^s_{\eps}(x)$) is contained in the stable set of $x$ ($W^s(x)$) (see \cite{Ma} for a proof), while for $n$-expansive homeomorphisms such number does not exist (even when the shadowing property is present). However, we are able to prove some similar property: the existence of such number in a uniform way along the orbits.

\begin{mainthm}\label{A} If an $n$-expansive homeomorphism $f$, defined in a compact metric space $X$, has the shadowing property, then for each $x\in X$ there exists $\eps_x>0$ such that $$W^s_{\eps_x}(f^m(x))\subset W^s(f^m(x))$$ for every $m\in\mathbb{Z}$.
\end{mainthm}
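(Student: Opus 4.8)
The plan is to argue by contradiction. Fix an $n$-expansivity constant $c>0$ for $f$. If the conclusion fails for some $x\in X$, then for every $j\in\N$ there are $m_j\in\Z$ and a point $y_j\in W^s_{1/j}(f^{m_j}(x))\setminus W^s(f^{m_j}(x))$. Writing $w_j=f^{m_j}(x)$, set $r_j=\limsup_{k\to\infty}d(f^k(w_j),f^k(y_j))$; since the forward orbit of $y_j$ stays $1/j$-close to that of $w_j$ but is not asymptotic to it, $0<r_j\le 1/j$. As $r_j\to 0$, I may fix indices $j_1<j_2<\dots$ with $1/j_1\le c$ and with the amplitudes $r_{j_1}>r_{j_2}>\dots$ strictly decreasing.

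The second step transports the first $n$ of these ``branches'' next to one point of the orbit of $x$. For an integer $M$ larger than $m_{j_1},\dots,m_{j_n}$, put $P_0:=f^M(x)$ and $P_l:=f^{M-m_{j_l}}(y_{j_l})$ for $1\le l\le n$. These are $n+1$ distinct points: $P_l\ne P_0$ since $f^{M-m_{j_l}}$ is injective and $y_{j_l}\ne w_{j_l}$ (as $w_{j_l}\in W^s(w_{j_l})$ while $y_{j_l}\notin W^s(w_{j_l})$); and $P_l=P_{l'}$ with $l<l'$ would give $y_{j_{l'}}=f^{m_{j_{l'}}-m_{j_l}}(y_{j_l})$, which, the amplitude being invariant under $f$, forces $r_{j_l}=r_{j_{l'}}$, contradicting strict monotonicity. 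Using $f^k(P_l)=f^{M-m_{j_l}+k}(y_{j_l})$ and $f^k(P_0)=f^{M-m_{j_l}+k}(w_{j_l})$, one checks $d(f^k(P_l),f^k(P_0))\le 1/j_l$ for all $k\ge-(M-m_{j_l})$; hence on a window of the form $[-L_M,\infty)$ with $L_M\to\infty$ as $M\to\infty$, the orbits of $P_0,\dots,P_n$ are pairwise within $c$, while the forward ``amplitude'' of the orbit of $P_l$ relative to that of $P_0$ is exactly $r_{j_l}$ and $P_l\notin W^s(P_0)$.

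It remains to upgrade this to genuine two-sided $c$-proximity on all of $\Z$. Letting $M\to\infty$ along a subsequence for which $f^M(x)$ and each $P_l$ converge, one obtains points $q_0,\dots,q_n$ with $d(f^k(q_l),f^k(q_0))\le 1/j_l$ for every $k\in\Z$ (for a fixed $k$ this holds as soon as $M$ is large), so that $q_0,\dots,q_n\in\Gamma(q_0,c)$. If these $n+1$ points are pairwise distinct, this contradicts $n$-expansiveness and the theorem follows. (Alternatively, one closes up the past of each $P_l$ using the shadowing property and argues with the resulting genuine orbits.)

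The delicate point — and, I expect, the main obstacle — is exactly keeping the $n+1$ points distinct after this limit: as $M$ grows the transported branches crowd together near $q_0$, so a careless choice of the shift-times $M$ may collapse several of the $q_l$ onto $q_0$ or onto each other. What one must extract is a sequence of times along which $q_l$ stays at distance comparable to $r_{j_l}$ from $q_0$ \emph{simultaneously for all $l$}; the strict separation $r_{j_1}>r_{j_2}>\dots$ of the amplitudes is the tool that makes this feasible, presumably in tandem with the description of the local stable sets of $n$-expansive homeomorphisms with the shadowing property developed in the preceding section. (Correspondingly, if one prefers the shadowing route, the difficulty becomes reconciling the shadowing precision with the sizes $r_{j_l}$, since one needs the tracing orbits to be finer than these amplitudes.)
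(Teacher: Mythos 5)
Your reduction has a genuine gap, and it is exactly the one you flag at the end: the distinctness of the limit points $q_0,\dots,q_n$ is not just ``delicate'' but is where the argument breaks, and it cannot be repaired within your setup. The separation $r_{j_l}=\limsup_{k\to\infty}d(f^k(w_{j_l}),f^k(y_{j_l}))$ is a tail quantity: for each $M$ it is realized at forward times $k(M)$ that escape to infinity, so after passing to the limit $M\to\infty$ nothing survives of it; for every \emph{fixed} $k$ you only get $d(f^k(q_l),f^k(q_0))\le 1/j_l$, with no lower bound, and all the $q_l$ may well collapse onto $q_0$. The shadowing variant fails for a structural reason you also half-identify: to keep the traced orbits $z_l$ distinct you need the tracing precision $\alpha$ to satisfy $2\alpha<r_{j_l}$, but $r_{j_l}\le 1/j_l$ must itself be below the $\delta$ associated to $\alpha$ (so that the spliced sequence is a $\delta$-pseudo-orbit), and $\delta\le\alpha$; so $r_{j_l}<\delta\le\alpha$ and the required inequality $r_{j_l}>2\alpha$ is impossible. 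The root of the problem is that your contradiction hypothesis only produces \emph{one} bad point at each scale $1/j$, with separations from the base orbit that are forced to tend to $0$; no limiting or shadowing device can turn separations tending to $0$ into $n+1$ points that remain distinct inside a single dynamic ball.

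The paper avoids this by fixing the separations \emph{before} choosing the shadowing precision. It introduces the number $n(x,\eps)$ of distinct stable sets meeting $W^s_\eps(x)$ and first proves $n(x,\eps)\le n$ (Proposition \ref{local}): there the $n+1$ points $y_1,\dots,y_{n+1}$ all lie in $W^s_{\bar\eps}(x)$ for a \emph{fixed} $\bar\eps$ and are pairwise non-asymptotic, so their pairwise separation $r>0$ is a fixed constant; one then takes $\alpha=\min\{\bar\eps,r/4\}$, chooses $\beta$ for $\alpha$-shadowing, and splices the orbits of $y_i$ and $y_j$ at times where they come $\beta$-close (Case 2), obtaining $n+1$ genuinely distinct points in one dynamic ball of radius $c$ --- distinct because $r-2\alpha\ge r/2>0$. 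The pointwise statement $W^s_{\eps_x}(x)\subset W^s(x)$ then follows from the finiteness of the representative set $E(x,\eps)$, and the uniformity over all $m\in\Z$ (which is the actual content of Theorem \ref{A} beyond Proposition \ref{local2}) comes from the monotonicity $n(x,\eps)\le n(f(x),\eps)$ along orbits (Lemma \ref{finite}) together with its eventual stabilization (Lemma \ref{l}): one chooses $\eps_x$ so that the count equals $1$ at the stabilization time and propagates $1$ backwards. You would need to import some version of this counting-and-monotonicity structure, or an equivalent device that pins down a positive separation independent of the scale $1/j$, to close your argument.
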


Using this Theorem we are able to prove that some types of limit shadowing are still present (precise definitions are given in Section \ref{2}).

\begin{mainthm}\label{B}
If an $n$-expansive homeomorphism has the shadowing property then it has the limit shadowing property. If, in addition, it is topologically mixing, then it has the two-sided limit shadowing property.
\end{mainthm}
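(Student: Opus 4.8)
The plan is to mimic the argument that yields the limit shadowing property for expansive homeomorphisms with the shadowing property (see \cite{AH}), using Theorem~\ref{A} — and, for the second assertion, its unstable counterpart — in the role played there by the uniform inclusion $W^s_{\eps}(x)\subset W^s(x)$. For the first assertion, let $(x_i)_{i\geq 0}$ be a limit pseudo-orbit, fix $\eps_k\downarrow 0$, pick $\delta_k\downarrow 0$ so that (by shadowing) every $\delta_k$-pseudo-orbit is $\eps_k$-shadowed, and pick $N_k\uparrow\infty$ with $d(f(x_i),x_{i+1})<\delta_k$ for $i\geq N_k$. Shadowing the tail $(x_i)_{i\geq N_k}$ and pulling the shadowing point back by $f^{-N_k}$ then produces $w_k\in X$ with $d(f^j(w_k),x_j)<\eps_k$ for every $j\geq N_k$; nothing is asked of $f^j(w_k)$ for $j<N_k$, which is all that limit shadowing requires.

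The gluing step is the following. If a point $w$ tracks some tail of $(x_i)$ within $\eta$ and a point $w'$ tracks a further tail within $\eta'$, then $d(f^j(w),f^j(w'))<\eta+\eta'$ for all large $j$, i.e.\ $f^{N}(w')\in W^s_{\eta+\eta'}(f^{N}(w))$ for a suitable $N$; if $\eta+\eta'<\eps_w$, the constant attached to the orbit of $w$ by Theorem~\ref{A}, this local stable set is contained in $W^s(f^{N}(w))$, hence $d(f^j(w),f^j(w'))\to 0$ and so $\limsup_j d(f^j(w),x_j)\leq\eta'$. Fixing a base tracker $w$ with tracking precision $\eta<\eps_w$ and letting $w'$ range over trackers of precision $\eta'\downarrow 0$ we obtain $\limsup_j d(f^j(w),x_j)=0$, so $w$ limit-shadows $(x_i)$ and the first assertion follows.

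The hard part is to produce the base tracker $w$ whose tracking precision $\eta$ lies below its own constant $\eps_w$. Unlike in the expansive case, $\eps_w$ has no uniform positive lower bound and is known only after $w$ has been constructed, so one cannot simply shadow finely enough at the outset. I would handle this by an iteration: shadow the tail with precisions $\eta^{(1)}>\eta^{(2)}>\cdots\to 0$, obtaining trackers $v_m$; if $\eta^{(m)}<\eps_{v_m}$ for some $m$ we are done, and otherwise $\eps_{v_m}\to 0$ even though the $v_m$ track $(x_i)$ more and more closely — a configuration I would rule out using the finer description of the local stable sets of $n$-expansive homeomorphisms with shadowing that precedes Theorem~\ref{A} (near-finiteness of the sets $W^s_{\eps}$ modulo positive asymptoticity, and their behaviour under limits), which would pack $n+1$ points into a single set $\Gamma(\cdot,c)$, contradicting $n$-expansiveness. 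This is where most of the work is concentrated.

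For the topologically mixing case, let $(x_i)_{i\in\Z}$ be a bi-infinite limit pseudo-orbit. I would shadow its forward tail by a point $z^{+}$ and, working with $f^{-1}$, its backward tail by a point $z^{-}$, so that $d(f^j(z^{+}),x_j)<\eps$ for $j$ large positive and $d(f^j(z^{-}),x_j)<\eps$ for $j$ large negative. Topological mixing supplies, for each $\delta>0$, an integer $M$ such that any two $\delta$-balls are joined by a genuine orbit segment of every length $\geq M$; using this together with the shadowing property one splices a bi-infinite $\delta$-pseudo-orbit that coincides with the orbit of $z^{-}$ for $i\ll 0$, with the orbit of $z^{+}$ for $i\gg 0$, and is an arbitrary mixing orbit segment in between — the middle part being irrelevant since two-sided limit shadowing only controls $|i|\to\infty$. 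Shadowing this pseudo-orbit gives $y$ with $d(f^i(y),x_i)$ small for $|i|$ large, and then the gluing step of the second paragraph, run on the stable side as $i\to+\infty$ and, via the unstable analogue of Theorem~\ref{A}, on the unstable side as $i\to-\infty$, upgrades a fixed such $y$ to $d(f^i(y),x_i)\to 0$ as $|i|\to\infty$. The obstacle of the third paragraph recurs at both ends and is resolved in the same manner.
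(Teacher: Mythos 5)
Your overall architecture is the right one, and your second part (mixing $\Rightarrow$ two-sided limit shadowing) is essentially the paper's argument: limit-shadow the forward tail by a point $p_2$ and the backward tail (via $f^{-1}$) by a point $p_1$, splice with specification, $\eps$-shadow the resulting pseudo-orbit with $\eps\leq\min\{\eps_{p_1},\eps_{p_2}\}$, and conclude via Theorem~\ref{A} and its unstable analogue. Note that in this form the circularity you worry about does \emph{not} recur at the ends: the constants $\eps_{p_1},\eps_{p_2}$ are attached to $p_1,p_2$, which are produced \emph{before} $\delta$ and the spliced pseudo-orbit are chosen, so you may simply take $\delta$ adapted to $\min\{\eps_{p_1},\eps_{p_2}\}$. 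The recurrence of the obstacle in your write-up is an artifact of your first part being incomplete.

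The genuine gap is exactly where you say ``this is where most of the work is concentrated'': producing a base tracker $w$ whose tracking precision $\eta$ satisfies $2\eta\leq\eps_w$. Your proposed iteration ends in the configuration $\eps_{v_m}\to 0$ with $v_m$ tracking ever-finer tails, and the claim that this ``packs $n+1$ points into a single $\Gamma(\cdot,c)$'' is not substantiated — the points $v_m$ accumulate on at most $n$ stable classes, which by itself contradicts nothing. The paper avoids the problem entirely rather than solving it: it never seeks a tracker beating its own constant. Instead (Proposition~\ref{ShadImpLimit}) it takes the tail-trackers $y_j$ (precision $\eps/(j+1)$), observes that $f^{k_j}(y_j)\in W^s_{\eps}(f^{k_j}(y_1))$ for all $j$, and invokes Proposition~\ref{local} together with Lemma~\ref{l}: the local stable set $W^s_\eps(f^{k_j}(y_1))$ meets at most $n(f^{l(y_1)}(y_1),\eps)\leq n$ stable classes, represented by the finitely many points of $f^{-l(y_1)}\bigl(E(f^{l(y_1)}(y_1),\eps)\bigr)$. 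By pigeonhole, infinitely many $y_{j_m}$ lie in the stable set of a single representative $p_i$, and then $p_i$ itself limit-shadows $(x_k)$: for any $\alpha>0$ choose $m$ with $\eps/(j_m+1)<\alpha/2$ and use $y_{j_m}\in W^s(p_i)$. You should replace your third paragraph by this pigeonhole argument (which requires Proposition~\ref{local} and Lemma~\ref{l}, not just the statement of Theorem~\ref{A}); with that substitution, your first two paragraphs become unnecessary and the rest of your proof goes through.
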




These results generalize Theorems C and D in \cite{CK} to the $n$-expansive scenario and gives some direction to the following question:

\begin{q}\label{q}
Does there exist a topologically mixing $n$-expansive homeomorphism that is not expansive?
\end{q}

The two-sided limit shadowing property is known to be one of the strongest notions of pseudo-orbit tracing properties since it implies many of them (it implies even the specification property, see \cite{CK}). Its importance relies on its relation with hyperbolicity and transitivity, as one can see in \cite{C} and \cite{C2}. Thought there are examples of non-expansive homeomorphisms with the two-sided limit shadowing property \cite{CK}, the only known examples of homeomorphisms defined on compact and connected finite dimensional manifolds satisfying it are topologically conjugated to a transitive Anosov diffeomorphism, being, in particular, expansive. It is expected that, in this scenario, these are the only ones (see \cite{Cthesis}). If this happens to be true, then Theorem \ref{B} would answer Question \ref{q} negatively for homeomorphisms defined on compact and connected finite dimensional manifolds and admitting the shadowing property.

The paper is organized as follows: in Section \ref{2} we state some necessary definitions, in Section \ref{W} we prove Theorem \ref{Example}, in Section \ref{P} we prove Theorem \ref{A} and in Section \ref{5} we prove Theorem \ref{B}.

\section{Definitions}\label{2}

In this section we state all definitions that will be necessary in the proofs of our results. During this section $f$ denotes a homeomorphism defined in a compact metric space $(X,d)$. The first definition is the (standard) shadowing property.

\begin{definition}
We say that a sequence $(x_k)_{k\in\Z}\subset X$ is a \emph{$\delta$-pseudo-orbit} if
it satisfies $$d(f(x_k),x_{k+1})<\delta, \,\,\,\,\,\,k\in\Z.$$
A sequence $(x_k)_{k\in\Z}\subset X$ is $\eps$-\emph{shadowed} if there exists $y\in X$ satisfying $$d(f^k(y),x_k)<\eps, \,\,\,\,\,\, k\in\Z.$$ We say that $f$ has the \emph{shadowing property} if for every $\eps>0$ there exists $\delta>0$ such that every two-sided $\delta$-pseudo-orbit is
$\eps$-shadowed.
\end{definition}

This property was extensively studied due to its relation to the hyperbolic and stability thoeries (see \cite{AH}, \cite{P}). Now we define the limit shadowing property.

\begin{definition}
We say that $(x_k)_{k\in\N}$ is a \emph{limit pseudo-orbit} if it
satisfies $$d(f(x_k),x_{k+1})\rightarrow 0, \,\,\,\,\,\, k\rightarrow\infty.$$
A sequence $\{x_k\}_{k\in\N_0}$ is \emph{limit-shadowed} if there exists $y\in X$ such that $$d(f^k(y),x_k)\rightarrow 0, \,\,\,\,\,\, k\rightarrow
\infty.$$ We say that $f$ has the \emph{limit shadowing property} if every limit pseudo-orbit is limit-shadowed.
\end{definition}

This property was introduced by T. Eirola, O. Nevanlinna, S. Pilyugin in \cite{ENP} and further studied by S. Pilyugin in \cite{P1}. The first author of this paper proved (Lemma 2.1 in \cite{C}) that any expansive homeomorphism, defined in a compact metric space, with the shadowing property has the limit shadowing property. This gives many examples of homeomorphisms with the limit shadowing property. If we consider bilateral sequences of $X$ we can define a property called two-sided limit shadowing.

\begin{definition}
We say that a sequence $(x_k)_{k\in\Z}$ of points of a metric space $(X,d)$ is a \emph{two-sided limit pseudo-orbit} if it satisfies $$d(f(x_k),x_{k+1})\rightarrow 0, \,\,\,\,\,\, |k|\rightarrow\infty.$$
A sequence $(x_k)_{k\in\Z}\subset X$ is \emph{two-sided limit shadowed} if there exists $y\in X$ satisfying $$d(f^k(y),x_k)\rightarrow 0, \,\,\,\,\,\,
|k|\rightarrow \infty.$$ In this situation, we also say that $y$ \emph{two-sided limit shadows} $(x_k)_{k\in\Z}$. We say that
$f$ has the \emph{two-sided limit shadowing property} if every two-sided limit pseudo-orbit is two-sided limit shadowed.
\end{definition}

Tough this property is very similar to define, it is very different from the limit shadowing property. The first author studies this property in detail in \cite{Cthesis}, \cite{C}, \cite{C2} and \cite{CK}. Among many results, it is proved that this property implies the shadowing property, the limit shadowing property, the average shadowing property, the assymptotic average shadowing property and even the specification property, being one of the strongest known notions of pseudo-orbit tracing properties. Now the specification property is defined.

\begin{definition}
Let $\tau=\{I_1,\dots,I_m\}$ be a finite collection of disjoint finite subsets of consecutive integers, $I_i=[a_i,b_i]\cap\Z$ for some $a_i,b_i\in\Z$, with
$$
a_1\le b_1 < a_2\le b_2 <\ldots < a_m\le b_m.$$
Let a map $P\colon \bigcup_{i=1}^mI_i\rightarrow X$ be such that for each $I\in \tau$ and $t_1, t_2\in I$ we have $$f^{t_2-t_1}(P(t_1))=P(t_2).$$
We call a pair $(\tau,P)$ a \emph{specification}. We say that the specification $S=(\tau,P)$ is \emph{$L$-spaced} if $a_{i+1}\geq b_i+L$ for all $i\in\{1,\dots,m-1\}$. Moreover, $S$ is \emph{$\eps$-shadowed} by $y\in X$ for $f$ if $$d(f^k(y),P(k))<\eps \,\,\,\,\,\, \textrm{for every} \,\,\, k\in \bigcup_{i=1}^mI_i.$$
We say that a homeomorphism $f\colon X\rightarrow X$ has the \emph{specification property} if for every $\eps>0$ there exists $L\in\N$ such that every $L$-spaced specification is $\eps$-shadowed.
\end{definition}



Every continuous map with the specification property is topologically mixing.

\begin{definition}
We say that $f$ is \emph{transitive} if for any pair $(U,V)$ of non-empty open subsets of $X$ there is $k\in\N$ such that $f^k(U)\cap V\neq\emptyset$. We say that $f$ is \emph{totally transitive} if all its iterates $f^k$, $k\in\N$, are transitive. We say that $f$ is \emph{topologically mixing} if for any pair $(U,V)$ of non-empty open subsets of $X$ there is $k\in\N$ such that $f^j(U)\cap V\neq\emptyset$ for every $j\geq k$.
\end{definition}

It is known that the topologically mixing property implies the totally transitive property, which, in turn, implies the transitive property. In the non-transitive scenario, the notion of a \emph{chain-recurrent class} is important.

\begin{definition}
The \emph{chain-recurrent class} of a point $x\in X$ is the set of all points $y\in X$ such that for every $\eps>0$ there exist a finite $\eps$-pseudo orbit starting at $x$ and ending at $y$ and another finite $\eps$-pseudo orbit starting at $y$ and ending at $x$.
\end{definition}

It is easy to see that transitive homeomorphisms admit only one chain recurrent class, while the example we exhibit in Theorem \ref{Example} admits an infinite number of such classes. A generalization of the notion of $n$-expansivity is the \emph{finite expansivity} and we define it as follows.

\begin{definition}
We say that $f$ is \emph{finite expansive} if there exists $c>0$ such that for every $x\in X$ the set $$\Gamma(x,c):=\{y\in X \,\,\, ; \,\,\, d(f^n(x),f^n(y))\leq c, \,\,\, n\in\Z\}$$ is finite. The number $c$ is called the \emph{finite-expansivity constant} of $f$.
\end{definition}

There are finite expansive homeomorphisms that are not $n$-expansive for any $n\in\N$ (see Remark \ref{example}) even when the shadowing property is present. We finish this section with the definitions of the stable and unstable sets of $f$ (also the local ones).

\begin{definition}
The \emph{stable set} of $x\in X$ is the set
$$W^s(x)=\{y\in X; \,\,\, d(f^k(y),f^k(x))\rightarrow0, \,\,\, \textrm{if} \,\,\, k\rightarrow\infty\}.$$
The \emph{unstable set} of $x\in X$ is the set
$$W^u(x)=\{y\in X; \,\,\, d(f^{-k}(y),f^{-k}(x))\rightarrow0, \,\,\, \textrm{if} \,\,\, k\rightarrow\infty\}.$$
For some number $\eps>0$, we define the $\eps$-\emph{stable set} of $x\in X$ as the set
$$W_{\eps}^s(x)=\{y\in X; \,\,\, d(f^k(y),f^k(x))\leq\eps, \,\,\, k\in\N\},$$
and define the $\eps$-\emph{unstable set} of $x\in X$ as the set
$$W_{\eps}^u(x)=\{y\in X; \,\,\, d(f^{-k}(y),f^{-k}(x))\leq\eps, \,\,\, k\in\N\}.$$
\end{definition}

\section{Proof of Theorem \ref{Example}}\label{W}

Consider an expansive homeomorphism $g$ defined in a compact metric space $(M,d_0)$ and satisfying the shadowing property. Further, suppose it has an infinite number of periodic points $\{p_k\}_{k\in\N}$, which we can suppose belong to different orbits. Define $X$ as the set $M\cup E$ where $E$ is an infinite enumerable set.

For each $i\in\{1,\dots,n-1\}$, each $k\in\N$ and each $j\in\{0,\dots,\pi(p_k)-1\}$ consider a point $q(i,k,j)\in E$. We can choose these points so that every point of $E$ appears once and only once in $\{q(i,k,j)\}_{i,k,j}$. Define a distance $d$ on $X$ by
$$d(x,y)=\begin{cases}d_0(x,y), &x,y\in M,\\
\frac{1}{k}+d_0(y,g^j(p_k)), &x=q(i,k,j),\,\,\, y\in M,\\
\frac{1}{k}, &x=q(i,k,j),\,\,\, y=q(l,k,j),\,\,\, i\neq l,\\
\frac{1}{k}+\frac{1}{m}+d_0(g^j(p_k),g^r(p_m)), &x=q(i,k,j),\,\,\, y=q(l,m,r),\\
&\hspace{+1.0cm} k\neq m \,\,\, \text{or}\,\,\, j\neq r.
\end{cases}$$
To check that $d$ is a compact metric on $X$ is an exercise that we leave to the reader. Define the homeomorphism $f:X\rightarrow X$ by $$f(x)=\begin{cases} g(x), &x\in M,\\
q(i,k,(j+1)\hspace{-0.3cm}\mod\pi(p_k)), &x=q(i,k,j).
\end{cases}$$ Note that $E$ splits into an infinite number of periodic orbits of $f$. Indeed, for each $i\in\{1,\dots,n-1\}$ and $k\in\N$, the set $\{q(i,k,j)\,;\,j\in\{0,\dots,\pi(p_k)-1\}\}$ is a periodic orbit for $f$ with period $\pi(p_k)$. Now we will check the announced properties of $f$:

\begin{enumerate}
\item $f$ is $n$-expansive\vspace{+0.3cm}

Since $g:M\to M$ is expansive it admits an expansivity constant $\delta>0$. Suppose that $n+1$ different points of $X$ belong to the same dynamic ball of radius $\delta$. The expansiveness of $f_{|M}=g$ assures that at most one of these points belong to $M$ and, consequently, at least $n$ of them belong to $E$. Moreover, at least two of these points are of the form $q(i,k,j)$ and $q(l,m,r)$ with $k\neq m$. Indeed, if this is not the case then two of them are of the form $q(i,k,j)$ and $q(i,k,r)$ with $j\neq r$. It follows that for each $s\in\Z$ we have
\begin{eqnarray*}d(g^s(g^j(p_k)),g^s(g^r(p_k)))&=&
d(f^s(q(i,k,j)),f^s(q(i,k,r)))-\frac{2}{k}\\
&\leq & d(f^s(q(i,k,j)),f^s(q(i,k,r)))\\
&\leq & \delta.
\end{eqnarray*}
This implies that $g^j(p_k)=g^r(p_k)$, which, in turn, implies that $j=r$ and we obtain a contradiction. Now note that for every $s\in\Z$ the following holds:
\begin{eqnarray*}
\hspace{+0.95cm}d(g^s(g^j(p_k)),g^s(g^r(p_m)))&=&
d(f^s(q(i,k,j)),f^s(q(l,m,r)))-\frac{1}{k}-\frac{1}{m}\\
&\leq &d(f^s(q(i,k,j)),f^s(q(l,m,r)))\\
&\leq &\delta.
\end{eqnarray*}
Therefore, $g^j(p_k)=g^s(p_m)$ and $p_m=p_k$, which is a contradiction with the fact that $k\neq m$. It is important to note that this can be done since $M$ has an infinite number of periodic orbits $\{p_k\}_{k\in\N}$.

\vspace{+0.3cm}
\item $f$ is not $(n-1)$-expansive\vspace{+0.3cm}

For each $\delta>0$ choose $k\in\N$ such that $\frac{1}{k}<\delta$ and note that the dynamic ball $\Gamma(p_k,\frac{1}{k})$ contains $n$ different points since for each $i\in\{1,\dots,n-1\}$ the point $q(i,k,0)$ belongs to $\Gamma(p_k,\frac{1}{k})$. This implies that $\Gamma(p_k,\delta)$ contains at least $n$ different points and that $f$ is not $(n-1)$-expansive, since this can be done for any $\delta>0$.

\vspace{+0.3cm}
\item $f$ has the shadowing property\vspace{+0.3cm}

Since $g$ has the shadowing property, for each $\epsilon>0$ we can consider $\delta_g>0$ such that every $\delta_g$-pseudo orbit of $g$ is $\frac{\epsilon}{2}$-shadowed. Choose $m\in\mathbb{N}$ such that $$\frac{1}{m}<\min\left\{\frac{\epsilon}{2},\frac{\delta_g}{3}\right\}$$ and let $\delta=\frac{1}{m}$. If $\{x_i\}_{i\in\mathbb{Z}}\subset X$ is a $\delta$-pseudo orbit of $f$ then either $\{x_i\}_{i\in\mathbb{Z}}$ is one of the orbits $\{q(l,k,j)\,;\,j\in\{0,\dots,\pi(p_k)-1\}\}$, with $l\in\{1,\dots,n-1\}$ and $k\in\{1,\dots,m-1\}$, or $\{x_i\}_{i\in\mathbb{Z}}$ does not contain any point of these orbits. In the first case $\{x_i\}_{i\in\mathbb{Z}}$ is obviously shadowed, so we will focus on the second case. Thus if $x_i=q(l,k,j)$ then $k\geq m$.

Define a sequence $\{y_i\}_{i\in\mathbb{Z}}\subset M$ by
$$y_i=\begin{cases}x_i,&x_i\in M,\\
g^j(p_k),&x_i=q(l,k,j).
\end{cases}$$
The sequence $(y_i)_{i\in\Z}$ is a $\delta_g$-pseudo orbit for $g$ since for each $i\in\Z$ the following holds:
\begin{eqnarray*}\hspace{+1.2cm}d(g(y_i),y_{i+1})&=&
d(f(y_i),y_{i+1})\\
&\leq& d(f(y_i),f(x_i))+d(f(x_i),x_{i+1})+d(x_{i+1},y_{i+1}) \\
&\leq&\frac{1}{m}+\frac{1}{m}+\frac{1}{m}\\
&\leq&\delta_g.
\end{eqnarray*}
Then there exists $x\in M$ such that $$d(g^i(x),y_i)<\frac{\eps}{2}, \,\,\,\,\,\, i\in\Z.$$
It follows that $(x_i)_{i\in\Z}$ is $\eps$-shadowed by $x$ since for each $i\in\Z$ the following holds:
\begin{eqnarray*}
d(f^i(x),x_i)&\leq&d(f^i(x),y_i)+d(y_i,x_i)\\
&\leq&\frac{\eps}{2}+\frac{1}{m}\\
&\leq&\eps.
\end{eqnarray*}
Since this can be done for any $\eps>0$ we obtain that $f$ has the shadowing property.

\vspace{+0.3cm}
\item $f$ admits an infinite number of chain-recurrent classes\vspace{+0.3cm}

Different periodic orbits in $E$ belong to different chain-recurrent classes. Indeed, every point $q(i,k,j)\in E$ satisfies $$d(q(i,k,j),x)\geq\frac{1}{k}, \,\,\,\,\,\, x\in X\setminus\{q(i,k,j)\}.$$ This means that if $0<\eps<\frac{1}{k}$ then the orbit of $q(i,k,j)$ cannot be connected by $\eps$-pseudo orbits with any other point of $X$. This proves that the chain recurrent class of $q(i,k,j)$ contains only its orbit and we conclude the proof.
\end{enumerate}

\begin{remark}
This construction can be done starting with any expansive homeomorphism with the shadowing property that admits an infinite number of periodic points. Examples of these systems are the Anosov diffeomorphisms and good references of these systems are (\cite{A}, \cite{AH}, \cite{S} and many others).
\end{remark}

\begin{remark}\label{example}
This example may be slightly modificated to obtain a finite expansive homeomorphism that is not $n$-expansive for any $n\in\N$, has the shadowing property and admits an infinite number of chain recurrent classes. Instead of adding $n-1$ periodic orbits one just have to add $k-1$ periodic orbits near each $p_k$. In this case, the set $\Gamma(p_k,\frac{1}{k})$ will contain $k$ different points of $X$ instead of $n$. The details are left to the reader.
\end{remark}

\begin{remark}
This theorem was born from some examples discussed by the second author of this paper and A. Artigue during a research visit to the Universidad de la Rep\'ublica in Uruguay, though these examples were thought in another setting.
\end{remark}

\section{Local stable sets}\label{P}

The example of Theorem \ref{Example} also shows that there is no number $\eps>0$ such that $W_{\eps}^s(x)\subset W^s(x)$ and $W_{\eps}^u(x)\subset W^u(x)$ for every $x\in X$. Indeed, for every $\eps>0$ one can consider $k\in\N$ such that $\frac{1}{k}<\eps$ and note that all the periodic points $q(i,k,j)$, with $i\in\{1,\dots,n-1\}$, belong to $W_{\eps}^s(p_k)$ and $W_{\eps}^u(p_k)$ but do not belong to $W^s(p_k)$ nor $W^u(p_k)$. However, in this section, we prove Theorem \ref{A}, which gives us a similar property. The first step is to prove the following proposition.

\begin{proposition}\label{local2} If an $n$-expansive homeomorphism $f$, defined in a compact metric space $X$, has the shadowing property, then for each $x\in X$ there exists $\eps_x>0$ such that $W^s_{\eps_x}(x)\subset W^s(x)$ and $W^u_{\eps_x}(x)\subset W^u(x)$.
\end{proposition}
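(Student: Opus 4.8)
The plan is to argue by contradiction, using compactness to extract a point around which the $\eps_x$-stable set fails to collapse into the stable set for every scale $\eps$, and then manufacture from this failure an excess of orbits moving together, contradicting $n$-expansivity. Suppose no $\eps_x$ works at $x$. Then for each $m\in\N$ there is a point $y_m\in W^s_{1/m}(x)\setminus W^s(x)$. Since $y_m\notin W^s(x)$, there is $\beta_m>0$ and a sequence of times $k\to\infty$ along which $d(f^k(y_m),f^k(x))\geq\beta_m$; on the other hand $d(f^k(y_m),f^k(x))\leq 1/m$ for all $k\geq 0$, so necessarily $\beta_m\leq 1/m$ and in particular $\beta_m\to 0$. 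The real content is that for each $m$ the forward orbits of $x$ and $y_m$ stay $1/m$-close yet keep ``separating and returning'' at a fixed positive (though tiny) distance $\beta_m$.

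Now I would exploit the shadowing property to convert this into a genuine pair of distinct orbits that shadow each other forwards. Fix an expansivity-type constant $c>0$ for $f$ (an $n$-expansivity constant). Choose $\eps<c/2$. Let $\delta>0$ be the corresponding shadowing constant. Pick $m$ with $1/m<\delta$. Consider the infinite sequence obtained by following $x$ for a long stretch $[0,N_1]$, then jumping to $y_m$ (a jump of size $\le 1/m<\delta$, since $d(f^{N_1}(x),f^{N_1+1}(x))$ matched against the orbit of $y_m$ shifted — more carefully, one interleaves blocks of the orbits of $x$ and $y_m$, each jump between consecutive blocks being at most $1/m<\delta$ because both points have $1/m$-close forward orbits). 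Because the orbits of $x$ and $y_m$ do not converge, one can choose the block lengths so that the resulting $\delta$-pseudo-orbit is genuinely ``non-trivial'': the shadowing point $z$ it produces must, on infinitely many blocks, be $\eps$-close to the orbit of $x$ and on infinitely many others $\eps$-close to the orbit of $y_m$, and since $x,y_m$ are $\beta_m$-separated infinitely often while $z$ cannot be $\eps$-close to both at those times, $z$ differs from both. Iterating this construction along $m\to\infty$ (or, alternatively, along a sequence of such pseudo-orbits built to switch ever more slowly) yields, after passing to subsequences and using compactness of $X$, a configuration of $n+1$ distinct points all of whose forward orbits stay within $2\eps<c$ of one another; pushing the same idea with two-sided blocks, or simply noting that the points produced can be chosen to have bi-infinite $c$-proximity, contradicts the $n$-expansivity constant $c$.

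The statement for $W^u$ is symmetric: apply the $W^s$ statement to $f^{-1}$, which is again $n$-expansive with the same constant and has the shadowing property (shadowing is invariant under passing to the inverse, since reversing a $\delta$-pseudo-orbit of $f$ gives one of $f^{-1}$), and observe $W^u_\eps(x,f)=W^s_\eps(x,f^{-1})$ and $W^u(x,f)=W^s(x,f^{-1})$. Then take $\eps_x$ to be the minimum of the two radii obtained.

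\textbf{Main obstacle.} The delicate point is the bookkeeping in the pseudo-orbit construction: one must arrange the switching between the orbit of $x$ and the orbit of $y_m$ so that the shadowing point is \emph{forced} to be a new point distinct from both (and, across the $m$'s, so that one accumulates $n+1$ mutually $c$-close orbits rather than just two). This requires choosing the block lengths adaptively using the fixed separation $\beta_m$ — ensuring a definite separation event occurs inside each ``$x$-block'' and each ``$y_m$-block'' — and then a compactness/pigeonhole argument to collect enough distinct limit points. Making ``enough'' precise (why $n+1$ and not merely $2$) is where the argument must be handled with care; I expect the paper does this by a finer analysis of the local stable sets, which is presumably why Proposition \ref{local2} is stated separately before Theorem \ref{A}.
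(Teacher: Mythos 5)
There is a genuine gap, and it sits exactly where you flag the ``main obstacle''. Your contradiction hypothesis only supplies witnesses $y_m\in W^s_{1/m}(x)\setminus W^s(x)$, and, as you yourself observe, the separation scale $\beta_m$ between the forward orbits of $x$ and $y_m$ satisfies $\beta_m\le 1/m\to 0$. This defeats the switching construction: to force the shadowing point $z$ to be distinct from both $x$ and $y_m$ you must shadow with accuracy below $\beta_m/2$, but the jumps in your pseudo-orbit are of size up to $1/m\ge\beta_m$, so you would need the shadowing constant $\delta(\beta_m/4)$ to exceed $1/m$ --- and nothing guarantees this for any $m$; choosing $m$ larger shrinks $\beta_m$ at least as fast as it shrinks the jumps, so the requirement is circular. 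Moreover, producing two (or even $n$) mutually $c$-close distinct orbits is no contradiction to $n$-expansivity; you need $n+1$, and the ``iterate over $m$ and pass to subsequences'' step provides no mechanism for accumulating them, since the configurations $(x,y_m,z_m)$ collapse onto the orbit of $x$ in the limit.

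The paper circumvents this by first proving a uniform finiteness statement (Proposition \ref{local}): for $\eps=c/4$ the local stable set $W^s_{\bar\eps}(x)$ meets at most $n$ distinct stable classes, for every $x$ and every $0<\bar\eps\le\eps$. In that argument the hypothetical $n+1$ points $y_1,\dots,y_{n+1}$ lie in a ball of \emph{fixed} radius and have \emph{fixed} pairwise separations $r>0$ along unbounded time sequences, so the switching pseudo-orbits can be shadowed at the fixed accuracy $\min\{\bar\eps,r/4\}$ and yield $n+1$ genuinely distinct points inside one dynamic ball $\Gamma(z_1,c)$ --- a clean contradiction. Proposition \ref{local2} is then soft: the at most $n$ class representatives in $E(x,\eps)$ other than $x$ are separated from the orbit of $x$ by definite amounts $r_z>0$ at infinitely many times, and taking $\eps_x=\tfrac14\min_z r_z$ forces every $y\in W^s_{\eps_x}(x)$ out of every class except that of $x$. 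This finiteness of the set of stable classes is the ingredient missing from your argument: without it the separations of the ``other'' classes from $x$ could accumulate at $0$, which is precisely the regime your construction cannot handle. Your reduction of the unstable case to $f^{-1}$ is fine and matches the paper.
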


One can easily check that the conclusion of this proposition holds for the example of Theorem \ref{Example}. Toward proving this proposition we discuss the \emph{number of different stable $($unstable$)$ sets} of a homeomorphism in a local stable (unstable) set.

\begin{definition}\label{number}
For some number $\eps>0$ and some point $x\in X$ we say that a positive integer number $n(x,\eps)$ is the \emph{number of different stable sets} of $f$ in $W^s_{\eps}(x)$ if
\begin{enumerate}
  \item there exists a set $E(x,\eps)\subset W_{\eps}^s(x)$ with $n(x,\eps)$ elements such that if two different points $y,z\in E(x,\eps)$ then $y\notin W^s(z)$,
  \item if $y_1,y_2,\dots,y_{n(x,\eps)+1}$ are $n(x,\eps)+1$ different points of $W_{\eps}^s(x)$, then there exist two different points $y_i,y_j\in\{y_1,y_2,\dots,y_{n(x,\eps)+1}\}$ such that $y_i\in W^s(y_j)$.
\end{enumerate}
We define the \emph{number of different unstable sets} of $f$ in $W^u_{\eps}(x)$ in a similar way and denote it by $\bar{n}(x,\eps)$.
\end{definition}

It is obvious that $n(x,\eps)\geq1$ and $\bar{n}(x,\eps)\geq1$ for every $x\in X$ and $\eps>0$. It is also easy to see that $n(x,\eps)=1$ if and only if $W_{\eps}^s(x)\subset W^s(x)$, and that $\bar{n}(x,\eps)=1$ if and only if $W^u_{\eps}(x)\subset W^u(x)$. So for an expansive homeomorphism defined in a compact metric space $X$ it is known that there exists $\eps>0$ such that $n(x,\eps)=\bar{n}(x,\eps)=1$ for every $x\in X$. We are interested in studying the numbers $n(x,\eps)$ and $\bar{n}(x,\eps)$ for $n$-expansive homeomorphisms. We conjecture the following is true:

\begin{conj}
If $f$ is an $n$-expansive homeomorphism defined in a compact metric space $X$, then there exists $\eps>0$ such that $$n(x,\bar{\eps})\leq n$$ for every $x\in X$ and $0<\bar{\eps}\leq\eps$.
\end{conj}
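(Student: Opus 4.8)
The plan is to prove the equivalent assertion that there is a \emph{single} $\eps>0$ — we will take $\eps$ to be a fixed $n$-expansivity constant $c$ of $f$ — with $n(x,\eps)\le n$ for every $x\in X$; since $W^s_{\bar{\eps}}(x)\subset W^s_{\eps}(x)$, and hence $n(x,\bar{\eps})\le n(x,\eps)$, whenever $\bar{\eps}\le\eps$, this gives the statement as phrased. So suppose, for a contradiction, that there are $x\in X$ and $n+1$ points $y_0,\dots,y_n\in W^s_{c}(x)$ with $y_i\notin W^s(y_j)$ whenever $i\neq j$. (When $n=1$ this is the classical fact that small uniform local stable sets of an expansive homeomorphism lie in stable sets, which needs no shadowing; see \cite{Ma}. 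The scheme below is an attempt to carry that argument to the $n$-expansive setting.)

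The first step is an elementary compactness observation: \emph{for every $\rho>0$ there is $K\in\N$ such that, for all $k\ge K$, at least two of the points $f^k(y_0),\dots,f^k(y_n)$ lie within distance $\rho$ of one another.} Indeed, if this failed there would be times $k_l\to\infty$ along which these $n+1$ points remain pairwise $\rho$-separated; passing to a subsequence with $f^{k_l}(y_i)\to v_i$ for every $i$ and $f^{k_l}(x)\to u$, the inequalities $d(f^m(y_i),f^m(x))\le c$ ($m\in\N$) pass to the limit to give $d(f^p(v_i),f^p(u))\le c$ for \emph{all} $p\in\Z$, so $v_0,\dots,v_n\in\Gamma(u,c)$; but these are $n+1$ distinct points, contradicting $n$-expansivity.

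To turn this into a contradiction with $y_i\notin W^s(y_j)$, pass to the product homeomorphism $F=f\times\cdots\times f$ ($n+1$ factors) of $X^{n+1}$ and put $A=\omega\big((y_0,\dots,y_n),F\big)$, a nonempty compact $F$-invariant set. Running the argument of the previous paragraph along any sequence $k_l\to\infty$ with $F^{k_l}(y_0,\dots,y_n)\to w$ shows that \emph{every} $w=(w_0,\dots,w_n)\in A$ has all its coordinates in one common set $\Gamma(u_w,c)$, hence at most $n$ distinct coordinates; thus $A\subset\bigcup_{i\neq j}\Delta_{ij}$, where $\Delta_{ij}=\{w\in X^{n+1}:w_i=w_j\}$. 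On the other hand, the set of limit points of the pairs $\big(f^k(y_i),f^k(y_j)\big)$, $k\to\infty$, is exactly $\pi_{ij}(A)$ for the projection $\pi_{ij}\colon X^{n+1}\to X^2$, so $y_i\notin W^s(y_j)$ is equivalent to $A\not\subset\Delta_{ij}$. Hence the hypotheses say precisely that $A$ is a compact $F$-invariant set contained in $\bigcup_{i\neq j}\Delta_{ij}$ but in no single $\Delta_{ij}$; the goal is to derive a contradiction from this, using that every $w\in A$ has its coordinates in a common $\Gamma(\cdot,c)$.

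This last step is the heart of the matter and the part I expect to be hard. Set-theoretic information alone does not suffice: for $n\ge 3$ the set $\Delta_{01}\cup\Delta_{23}$ is $F$-invariant, consists of tuples with at most $n$ distinct coordinates, and lies in no single $\Delta_{ij}$, so the $n$-expansivity bound has to be used more globally than by merely bounding fibre sizes. A helpful remark is that, since $f$ is injective, the partition of $\{0,1,\dots,n\}$ recording which coordinates of $w$ agree is constant along each $F$-orbit and can only refine under small perturbations of $w$; on a minimal subset $A'\subset A$ this already forces $A'\subset\Delta_{ij}$ for some pair, but the asymptotics of $(y_i,y_j)$ are governed by $\pi_{ij}(A)$, not by $\pi_{ij}(A')$, so the gap is not yet closed. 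The two routes I would pursue are (i) to promote this to a genuine decomposition $A=\bigsqcup_s\big(A\cap\Delta_{i_s j_s}\big)$ into $F$-invariant relatively clopen pieces and then exclude a nontrivial one by a dynamical argument, and (ii) to bring in the shadowing property — relating $A$ to chain-recurrent behaviour via Proposition \ref{local2} and Theorem \ref{A} — and first settle the weaker statement with that additional hypothesis. I expect (i) to be where the real difficulty lies.
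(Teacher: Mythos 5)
Your argument does not close, and the gap you flag at the end is a genuine one -- in fact it is exactly the point at which this statement is still open: the paper presents it only as a conjecture, and proves it (Proposition \ref{local}) under the additional hypothesis that $f$ has the shadowing property. Your first two steps are sound. The compactness observation (if the $n+1$ points $f^k(y_0),\dots,f^k(y_n)$ stayed pairwise $\rho$-separated along $k_l\to\infty$, their limits would be $n+1$ distinct points of a single $\Gamma(u,c)$) is essentially Case~1 of the paper's proof of Proposition \ref{local}, and indeed uses no shadowing. The reformulation via $F=f\times\cdots\times f$ and $A=\omega((y_0,\dots,y_n),F)$ is also correct: $y_i\notin W^s(y_j)$ iff $A\not\subset\Delta_{ij}$, and every point of $A$ has at most $n$ distinct coordinates, so $A\subset\bigcup_{i\neq j}\Delta_{ij}$ while lying in no single $\Delta_{ij}$.

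The missing step is to derive a contradiction from that configuration, and your own example $\Delta_{01}\cup\Delta_{23}$ already shows that the information you have retained about $A$ (invariance, compactness, $\le n$ distinct coordinates per point, not contained in one $\Delta_{ij}$) is consistent and therefore cannot by itself yield a contradiction; you must use that $A$ is the $\omega$-limit set of a \emph{single} tuple, i.e.\ that the different ``coincidence patterns'' occurring in $A$ are realized along one forward orbit. This is precisely where the paper injects shadowing (your route (ii)). In Case~2 of Proposition \ref{local} the authors take a pair $(y_i,y_j)$ with $\liminf_k d(f^k(y_i),f^k(y_j))=0$ but $\limsup_k d(f^k(y_i),f^k(y_j))>r$, and splice the forward orbit of $y_i$ with a block of the forward orbit of $y_j$ at $n+1$ different switching windows $[b_{w+l},b_{w+l+1})$; each splice is a $\beta$-pseudo-orbit because the switches happen at times when the two orbits are $\frac1w$-close, and the shadowing property converts these $n+1$ pseudo-orbits into $n+1$ genuine points $z_1,\dots,z_{n+1}$, which are pairwise distinct (they are told apart at the times $a^{(i,j)}_{w+t}$ where the orbits are $r$-separated) and all lie in $\Gamma(z_1,c)$ -- contradicting $n$-expansivity. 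Without shadowing there is no mechanism for turning the recombinations encoded in $A$ into actual orbits, and your route (i) (a clopen invariant decomposition of $A$ along the diagonals, followed by a dynamical exclusion) is not carried out; the passage from ``a minimal subset of $A$ lies in one $\Delta_{ij}$'' to ``$A$ lies in one $\Delta_{ij}$'' is exactly what fails. So the proposal proves nothing beyond what compactness gives, and the statement, as the paper records, remains a conjecture in the absence of shadowing.
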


We prove this conjecture when the shadowing property is present.

\begin{proposition}\label{local} If an $n$-expansive homeomorphism $f$, defined in a compact metric space $X$, has the shadowing property, then there exists $\eps>0$ such that $$n(x,\bar{\eps})\leq n$$ for every $x\in X$ and $0<\bar{\eps}\leq\eps$.
\end{proposition}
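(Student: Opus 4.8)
The plan is to prove Proposition \ref{local} by contradiction, exploiting the shadowing property to concatenate ``parallel'' stable orbits into a two-sided pseudo-orbit whose shadowing point produces too many points in a single dynamic ball, violating $n$-expansivity. Fix an $n$-expansivity constant $c>0$ and let $\eps>0$ be the shadowing constant associated (via the shadowing property) to $c/2$, so that every $\eps$-pseudo-orbit is $c/2$-shadowed; we will in fact want $\eps \le c/2$ as well. Suppose for contradiction that for some $0<\bar\eps\le\eps$ there is a point $x\in X$ with $n(x,\bar\eps)\ge n+1$. By Definition \ref{number}(1), we get points $y_1,\dots,y_{n+1}\in W^s_{\bar\eps}(x)$, pairwise in distinct stable sets, i.e.\ $y_i\notin W^s(y_j)$ for $i\ne j$.

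The key step is to upgrade the ``distinct stable sets'' condition into a genuine separation along the forward orbit. Since $y_i\notin W^s(y_j)$ for each pair $i\ne j$, the distances $d(f^k(y_i),f^k(y_j))$ do not tend to $0$, so there is some $\gamma>0$ and infinitely many times $k$ (depending on the pair) where $d(f^k(y_i),f^k(y_j))\ge\gamma$. First I would like to arrange a \emph{single} time $N$ that works simultaneously for all finitely many pairs; this requires a little care because each pair may fail to contract only along its own subsequence, but since there are only $\binom{n+1}{2}$ pairs one can argue — passing to a common large time — that there exists $N$ with $d(f^N(y_i),f^N(y_j))\ge\gamma$ for all $i\ne j$. (If a naive choice of common $N$ is not immediate, one instead plays the pairs off against one another by iterating far enough that the non-contracting pairs have already separated while noting that a pair once separated by $\gamma$ at time $k$ need not stay separated — so the honest route is: for each pair pick its own separation time and then build the pseudo-orbit for each $y_i$ independently, only needing that the shadowing point be \emph{distinct} from the others, which follows from a uniform lower bound argument below.) The cleanest formulation: choose $N$ large enough that the backward segments $y_i, f(y_i),\dots$ and then, after time $N$, we glue.

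Now build a two-sided pseudo-orbit $z$: for $k\le 0$ set $z_k=f^k(x)$; for $0\le k\le N$ we would like $z_k$ to ``be'' all of the $f^k(y_i)$ at once, which is impossible, so instead I build $n+1$ \emph{separate} genuine orbits $(f^k(y_i))_{k\in\Z}$ and observe they are already honest orbits, hence trivially shadowed by themselves — that gives nothing. The actual mechanism must therefore be: the orbit of $x$ itself, together with the $y_i$'s all lying in $W^s_{\bar\eps}(x)$ with $\bar\eps\le c/2$, already places $x,y_1,\dots,y_{n+1}$ in forward dynamic proximity $\le \bar\eps\le c$; the missing ingredient is \emph{backward} proximity. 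Here is where shadowing enters: for each $i$, form the $\eps$-pseudo-orbit that follows $f^k(y_i)$ for $k\ge 0$ and follows $f^k(x)$ for $k<0$ — the jump at $0$ has size $d(f(y_{i,-1}),y_i)$; more precisely follow $f^k(x)$ for $k\le 0$ and $f^k(y_i)$ for $k\ge 1$, with the jump $d(f(x), y_i)\le d(f(x),f(x)) \dots$ — one arranges the jump to be $\le\bar\eps<\delta$ since $y_i\in W^s_{\bar\eps}(x)$ forces $d(f(y_i),f(x))$ small only if $\bar\eps$ controls it, which it does up to a uniform-continuity adjustment. Let $w_i$ be a point $c/2$-shadowing this pseudo-orbit. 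Then for $k\le 0$, $d(f^k(w_i),f^k(x))\le c/2$, and for $k\ge 1$, $d(f^k(w_i),f^k(y_i))\le c/2$, so in particular $d(f^k(w_i),f^k(w_j))\le d(f^k(w_i),f^k(x))+d(f^k(x),f^k(w_j))\le c$ for $k\le 0$; for $k\ge 1$ one uses $f^k(y_i), f^k(y_j)\in$ the $\bar\eps$-neighborhood of $f^k(x)$ to again bound $d(f^k(w_i),f^k(w_j))\le c/2+\bar\eps+\bar\eps+c/2$ — so I must take $\eps$ (hence the shadowing precision) smaller, say everything $\le c/4$, to close this. Thus all of $w_1,\dots,w_{n+1}$ lie in $\Gamma(w_1,c)$. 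Finally, the $w_i$ are \emph{pairwise distinct}: if $w_i=w_j$ then for $k\ge1$ both $f^k(y_i)$ and $f^k(y_j)$ are within $c/2$ of $f^k(w_i)$, which does not immediately give $y_i\in W^s(y_j)$ — so distinctness is the real obstacle, and to get it I improve the pseudo-orbit so that beyond some time $N$ it tracks $f^k(y_i)$ with precision tending to $0$, not merely $\le\bar\eps$; since $y_i\in W^s(x)$? No — $y_i$ need not be stably asymptotic to $x$.

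The honest resolution, and the step I expect to be the crux, is therefore to replace the bulk separation argument by the following: since $y_i\notin W^s(y_j)$, there exist $\gamma>0$ and times $k_{ij}\to\infty$ with $d(f^{k_{ij}}(y_i),f^{k_{ij}}(y_j))\ge\gamma$; by an auxiliary counting/pigeonhole over the $n+1$ points one extracts a \emph{single} scale $\gamma'>0$ and a single time $N$ such that the $n+1$ points $f^N(y_1),\dots,f^N(y_{n+1})$ are pairwise $\gamma'$-separated. (This uses that on a compact space a family of $n+1$ points that are pairwise non-asymptotic must, along a suitable common subsequence, realize a uniform separation; the proof is a finite induction on pairs, at each stage passing to a subsequence of times.) Then the pseudo-orbit for the contradiction is: follow $f^k(x)$ for $k\le 0$, jump (size $\le$ uniform-continuity bound of $\bar\eps$) to $f^k(y_i)$ for $1\le k\le N$, and for $k>N$ simply \emph{continue} $f^k(y_i)$ — this is already an orbit, no jump. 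Each $\eps$-pseudo-orbit is $\rho$-shadowed by some $w_i$ with $\rho$ as small as we like ($\rho\le c/4$ and $\rho<\gamma'/2$). For $k\le N$, $d(f^k(w_i),f^k(w_j))\le 2\rho + (\text{$\bar\eps$-terms}) + \dots \le c$ after choosing constants; hence $w_1,\dots,w_{n+1}\in\Gamma(w_1,c)$. And now distinctness is forced: $d(f^N(w_i),f^N(w_j))\ge d(f^N(y_i),f^N(y_j)) - 2\rho \ge \gamma' - \gamma' = \dots > 0$ provided $\rho<\gamma'/2$, so $w_i\ne w_j$. That produces $n+1$ distinct points in $\Gamma(w_1,c)$, contradicting $n$-expansivity. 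The main obstacle, as flagged, is manufacturing the common separation time $N$ and simultaneously keeping all the accumulated error terms (two shadowing errors $\rho$, two $\bar\eps$-proximity terms, one uniform-continuity term at the gluing) below the $n$-expansivity constant $c$ — this is handled by fixing $c$ first, then choosing $\bar\eps,\rho,\delta$ small in the right order. One then records that $\bar n(x,\bar\eps)\le n$ by the symmetric argument applied to $f^{-1}$, which also has the shadowing property and the same $n$-expansivity constant, and that the same $\eps$ works for all $0<\bar\eps\le\eps$ since shrinking $\bar\eps$ only shrinks $W^s_{\bar\eps}(x)$.
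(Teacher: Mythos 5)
Your reduction to the case of a single common separation time is where the argument breaks. From $y_i\notin W^s(y_j)$ you only get, for each pair, some $r>0$ and a \emph{pair-dependent} sequence of times $a^{(i,j)}_m\to\infty$ along which $d(f^{a^{(i,j)}_m}(y_i),f^{a^{(i,j)}_m}(y_j))>r$. Your claimed ``finite induction on pairs, passing to subsequences'' to manufacture one time $N$ at which all $\binom{n+1}{2}$ pairs are simultaneously $\gamma'$-separated does not work: restricting to the subsequence along which the pair $(1,2)$ is separated may land you entirely in times at which the pair $(2,3)$ is arbitrarily close, since ``non-asymptotic'' only guarantees separation along \emph{some} subsequence, not along every subsequence. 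Concretely, one can have $d(f^k(y_1),f^k(y_2))\to0$ along an infinite set $A$ of times while $d(f^k(y_2),f^k(y_3))\to0$ along the complementary infinite set, with each pair separated by $r$ off its own ``close'' set; then no pair is asymptotic, yet for every $\gamma'>0$ all sufficiently large times have some pair within $\gamma'$. Your proof covers only the favorable case and asserts away the hard one; the rest of the write-up (the gluing of the backward orbit of $x$ to the forward orbit of $y_i$, the bookkeeping of constants) is fine for that favorable case but does not touch the obstruction.

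The paper's proof is organized precisely around this dichotomy. In Case 1 (a common sequence of separation times exists for all pairs) it does not even need shadowing: it passes to a convergent subsequence $f^{a_{m_k}}(y_i)\to x_i$, $f^{a_{m_k}}(x)\to x_0$, and gets $n+1$ distinct points $x_1,\dots,x_{n+1}$ in $\Gamma(x_0,c)$ directly. In Case 2 (no such common sequence) it extracts a single pair $(y_i,y_j)$ whose orbits oscillate: they come within $1/m$ at times $b_m$ and are $r$-separated at interleaved times $a^{(i,j)}_m$ with $b_m<a^{(i,j)}_m<b_{m+1}$. It then builds $n+1$ pseudo-orbits, the $l$-th of which follows the orbit of $y_i$ except on the window $[b_{w+l},b_{w+l+1})$ where it follows $y_j$, using the smallness of $d(f^{b_m}(y_i),f^{b_m}(y_j))$ to make the switches $\beta$-small. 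The $n+1$ shadowing points are pairwise distinct (tested at the times $a^{(i,j)}_{w+t}$ inside the windows) and all lie in $\Gamma(z_1,c)$ because $y_i,y_j\in W^s_{\bar\eps}(x)$, contradicting $n$-expansivity. To repair your proof you would need to supply this second mechanism (or an equivalent one); as written, the proposal has a genuine gap.
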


\begin{proof} Let $c>0$ be the $n$-expansivity constant of $f$, $\eps=\frac{c}{4}$ and $0<\bar{\eps}\leq\eps$. First, note that for any $x\in X$, the inequality $n(x,\bar{\eps})\leq n$ is equivalent to the following property: if $y_1,y_2,\dots, y_{n+1}$ are $n+1$ different points of $W_{\bar{\eps}}^s(x)$ then there exist two different points $y_i,y_j\in\{y_1,y_2,\dots,y_{n(x,\bar{\eps})+1}\}$ such that $y_i\in W^s(y_j)$.

Then, suppose that there are $n+1$ different points $y_1,y_2,...,y_{n+1}\in W^s_{\bar{\eps}}(x)$ satisfying $y_i\notin W^s(y_j)$ whenever $i\neq j$. This implies that there exists a number $r>0$ (which we can suppose smaller than $\bar{\eps}$) such that for each pair $(i,j)\in\{1,2,\dots,n+1\}\times\{1,2,\dots,n+1\}$ satisfying $i<j$, there exists a sequence of positive integer numbers $(a^{(i,j)}_m)_{m\in\N}$ satisfying $$a^{(i,j)}_m\to\infty, \,\,\,\,\,\, m\to\infty,$$ and such that $$d(f^{a^{(i,j)}_m}(y_i),f^{a^{(i,j)}_m}(y_j))>r, \,\,\,\,\,\, m\in\N.$$

In what follows we split the proof in two cases:

\vspace{+0.5cm}

\textbf{Case 1:} There exist $r'>0$ and a sequence of positive integer numbers $(a_m)_{m\in\N}$ such that $$a_m\rightarrow\infty, \,\,\,\,\,\, m\to\infty$$ and for each pair $(i,j)\in\{1,2,\dots,n+1\}\times\{1,2,\dots,n+1\}$ the following holds $$d(f^{a_m}(y_i),f^{a_m}(y_j))>r', \,\,\,\,\,\, m\in\N.$$

\vspace{+0.5cm}

The compactness of $X$ assures the existence of a subsequence $(a_{m_k})_{k\in\N}$ of $(a_m)_{m\in\N}$ and points $x_0,x_1,\dots,x_{n+1}\in X$ such that $f^{a_{m_k}}(x)\rightarrow x_0$ and $f^{a_{m_k}}(y_i)\rightarrow x_i$ for each $i\in\{1,2,\dots,n+1\}$. Moreover, for each $i\in\{1,2,\dots,n+1\}$ and each $m\in\Z$ the following holds:
\begin{eqnarray*}
d(f^m(x_i),f^m(x_0))&=&\lim_{k\rightarrow\infty} d(f^m(f^{a_{m_k}}(y_i)),f^m(f^{a_{m_k}}(x)))\\
&\leq&\bar{\eps}.
\end{eqnarray*}
The last inequality holds since $y_i\in W^s_{\bar{\eps}}(x)$ for each $i\in\{1,2,\dots,n+1\}$ and $a_{m_k}\to\infty$ when $k\to\infty$. Therefore, $x_i\in\Gamma(x_0,\bar{\eps})\subset\Gamma(x_0,c)$ for each $i\in\{1,2,\dots,n+1\}$. Since $c$ is an $n$-expansivity constant, the set $\Gamma(x_0,c)$ contains at most $n$ different points of $X$.

But the set $\{x_1,x_2,\dots,x_{n+1}\}$ is contained in $\Gamma(x_0,c)$ and $x_i\neq x_j$ whenever $i\neq j$, since\begin{eqnarray*}
d(x_i,x_j)&=&\lim_{k\rightarrow\infty}d(f^{a_{m_k}}(y_i),f^{a_{m_k}}(y_j))\\
&\geq& r'\\
&>&0.
\end{eqnarray*}
This is a contradiction.

\vspace{+0.5cm}

\textbf{Case 2:} The assumption of the first case is not satisfied.

\vspace{+0.5cm}

In this case, we can find (at least) two different indices $i,j\in\{1,...,n,n+1\}$ and a sequence of positive integer numbers $(b_m)_{m\in\N}$ satisfying $$b_m\rightarrow\infty, \,\,\,\,\,\, m\to\infty$$ and such that $$d(f^{b_m}(y_i),f^{b_m}(y_j))<\frac{1}{m}, \,\,\,\,\,\, m\in\N.$$ We can choose the sequence $(b_m)_{m\in\N}$ so that $$b_m<a^{(i,j)}_m<b_{m+1}, \,\,\,\,\,\, m\in\N.$$ Let $\alpha=\min\{\bar{\eps},\frac{r}{4}\}$ and consider the number $\beta>0$ (given by the shadowing property) such that every $\beta$-pseudo orbit is $\alpha$-shadowed. Choose $w\in\mathbb{N}$ such that $$\frac{1}{w}<\beta.$$ For each $l\in\{1,2,\dots,n+1\}$ we define a sequence $(x_k^l)_{k\in\Z}$ as follows:

$$x_k^l=\begin{cases}f^k(y_i), & k<b_{w+l} \,\,\, \text{or} \,\,\, k\geq b_{w+l+1},\\
f^k(y_j), & b_{w+l}\leq k<b_{w+l+1}.
\end{cases}$$

These sequences are $\beta$-pseudo orbits for $f$, because for $k=b_{w+l}-1$ we have

\begin{eqnarray*}d(f(x^l_{b_{w+l}-1}),x^l_{b_{w+l}})&=& d(f(f^{b_{w+l}-1}(y_i)),f^{b_{w+l}}(y_j)) \\
&=& d(f^{b_{w+l}}(y_i),f^{b_{w+l}}(y_j)) \\
&<&\frac{1}{w+l}\\
&<&\frac{1}{w}\\
&<&\beta,
\end{eqnarray*}

for $k=b_{w+l+1}-1$ we have

\begin{eqnarray*}d(f(x^l_{b_{w+l+1}-1}),x^l_{b_{w+l+1}}) &=&d(f(f^{b_{w+l+1}-1}(y_j)),f^{b_{w+l+1}}(y_i)) \\
&=& d(f^{b_{w+l+1}}(y_j),f^{b_{w+l+1}}(y_i)) \\
&<&\frac{1}{w+l+1}\\
&<&\frac{1}{w}\\
&<&\beta
\end{eqnarray*}
and $d(f(x^l_k),x_{k+1}^l)=0$, otherwise. The shadowing property assures the existence of $n+1$ points $z_1,z_2,\dots,z_{n+1}\in X$ such that $(x^l_k)_{k\in\Z}$ is $\alpha$-shadowed by $z_l$ for each $l\in\{1,2,\dots,n+1\}$. These points are different because if $t$ and $s$ are different indices in $\{1,2,...,n+1\}$ and $p=a_{w+t}^{(i,j)}$ then

\begin{eqnarray*}d(f^p(z_t),f^p(z_s))&\geq& d(x^t_p,x^s_p)-d(f^p(z_t),x^t_p)-d(f^p(z_s),x^s_p) \\
&=& d(f^p(y_j),f^p(y_i))-d(f^p(z_t)),x^t_p)-d(f^p(z_s),x^s_p)\\
&\geq& r-\alpha-\alpha\\
&\geq&\frac{r}{2}\\
&>&0.
\end{eqnarray*}

Moreover, the set $\{z_1,z_2,\dots,z_{n+1}\}$ is contained in $\Gamma(z_1,c)$. Indeed, if $k\leq0$ then $x^l_k=f^k(y_i)$ for every $l\in\{1,2,\dots,n+1\}$ and this implies that \begin{eqnarray*}d(f^k(z_1),f^k(z_l))&\leq& d(f^k(z_1),f^k(y_i))+d(f^k(y_i),f^k(z_l))\\
&\leq& d(f^k(z_1),x^1_k)+d(x^l_k,f^k(z_l))\\
&\leq& \alpha+\alpha\\
&\leq& 2\bar{\eps}\\
&\leq& c.
\end{eqnarray*}
If $k>0$ then \begin{eqnarray*}d(x^1_k,x^l_k)&\leq& d(x^1_k,x)+d(x,x^l_k)\\
&\leq& 2\bar{\eps}
\end{eqnarray*}

which implies that

\begin{eqnarray*} d(f^k(z_1),f^k(z_l))&\leq& d(f^k(z_1),x^1_k)+d(x^1_k,x^l_k)+d(x^l_k,f^k(z_l)) \\
&\leq& \alpha+2\bar{\eps}+\alpha\\
&\leq& 4\bar{\eps}\\
&\leq& c.
\end{eqnarray*}

This proves that $\Gamma(z_1,c)$ contains $n+1$ different points of $X$ and contradicts the fact that $c$ is an $n$-expansivity constant of $f$. This finishes the proof.
\end{proof}

\begin{remark}\label{finitenatural}
We note that the argument of the previous lemma can be slightly modified to prove that if a finite expansive homeomorphism has the shadowing property, then there exists $\eps>0$ such that $n(x,\bar{\eps})\in\N$ for every $x\in X$ and $0<\bar{\eps}\leq\eps$. We leave this as an exercise, though.
\end{remark}

Now we are ready to prove Proposition \ref{local2}. \vspace{+0.4cm}

\hspace{-0.45cm}\emph{Proof of Proposition \ref{local2}} : Let $\eps>0$ (given by Proposition \ref{local}) be such that $n(x,\eps)\leq n$ for every $x\in X$. Definition \ref{number} assures the existence of a set $E(x,\eps)$ with $n(x,\eps)$ points of $W^s_{\eps}(x)$ such that any two different points in $E(x,\eps)$ belong to different stable sets. This set has the additional property that any point $y\in W^s_\eps(x)$ belong to the stable set of some point $z\in E(x,\eps)$.

We can assume, without loss of generality, that $x\in E(x,\eps)$. Thus, if $z\in E(x,\eps)\setminus\{x\}$ then there exist $r_z>0$ and a sequence $(a_m^z)_{m\in\N}$ of positive integer numbers satisfying $$a_m^z\rightarrow\infty, \,\,\,\,\,\, m\to\infty$$ and such that $$d(f^{a^z_m}(z),f^{a^z_m}(x))>r_z, \,\,\,\,\,\, m\in\N.$$ If we consider the number $$\eps_x=\frac{1}{4}\min\{r_z\,;\,z\in E(x,\eps)\setminus\{x\}\}$$ then any point $y\in W_{\eps_x}^s(x)$ must belong to $W^s(x)$ because for any $z\in E(x,\eps)\setminus\{x\}$ and any $m\in\N$ we have
\begin{eqnarray*}d(f^{a^z_m}(z),f^{a^z_m}(y))&\geq& d(f^{a^z_m}(z),f^{a^z_m}(x))-d(f^{a^z_m}(x),f^{a^z_m}(y)) \\
&\geq& r_z-\eps_x\\
&\geq& \frac{r_z}{2},
\end{eqnarray*}
which implies that $y\notin W^s(z)$. This proves that $W^s_{\eps_x}(x)\subset W^s(x)$. The proof for the local unstable set is similar and we leave the details to the reader. \qed

\vspace{+0.5cm}

Now we prove some additional properties of the number $n(x,\eps)$.

\begin{lemma}\label{finite}
If $f$ is a finite expansive homeomorphism, defined in a compact metric space $X$, then there exists $\eps>0$ such that $$n(x,\bar{\eps})\leq n(f(x),\bar{\eps})$$ for every $x\in X$ and $0<\bar{\eps}\leq\eps$.
\end{lemma}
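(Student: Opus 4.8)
The plan is to compare the local stable structure at $x$ with that at $f(x)$ using the fact that $f$ maps $W^s_{\bar\eps}(x)$ into $W^s_{\bar\eps}(f(x))$ up to controlled distortion. Here is the key point: if $y \in W^s_{\bar\eps}(x)$, then $d(f^k(y), f^k(x)) \le \bar\eps$ for all $k \in \N$, and hence $d(f^k(f(y)), f^k(f(x))) \le \bar\eps$ for all $k \ge 0$, i.e. $f(y) \in W^s_{\bar\eps}(f(x))$. So $f$ restricts to an injection $W^s_{\bar\eps}(x) \to W^s_{\bar\eps}(f(x))$. Moreover $f$ (being a homeomorphism of a compact space, hence a bijection) sends stable sets to stable sets: $y \in W^s(z)$ if and only if $f(y) \in W^s(f(z))$. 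These two observations together say that $f$ carries the combinatorial data defining $n(x,\bar\eps)$ into the corresponding data at $f(x)$.

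Concretely, I would first invoke Remark~\ref{finitenatural} to fix $\eps > 0$ such that $n(x,\bar\eps) \in \N$ is well-defined (finite) for every $x \in X$ and every $0 < \bar\eps \le \eps$; this is what makes the statement meaningful in the finite expansive setting. Then, fixing such an $\bar\eps$ and an arbitrary $x$, let $E = E(x,\bar\eps) \subset W^s_{\bar\eps}(x)$ be the set with $n(x,\bar\eps)$ elements provided by condition (1) of Definition~\ref{number}: any two distinct points of $E$ lie in different stable sets. Consider $f(E) \subset W^s_{\bar\eps}(f(x))$. It has $n(x,\bar\eps)$ elements since $f$ is injective, and if $f(y), f(z)$ are distinct points of $f(E)$ with $f(y) \in W^s(f(z))$, then applying $f^{-1}$ gives $y \in W^s(z)$ with $y \ne z$, contradicting the defining property of $E$. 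So $f(E)$ witnesses, via condition (1) of Definition~\ref{number} applied at $f(x)$, that the ``number of different stable sets'' in $W^s_{\bar\eps}(f(x))$ is at least $n(x,\bar\eps)$; by condition (2), which bounds every such witnessing set by $n(f(x),\bar\eps)$, we get $n(x,\bar\eps) \le n(f(x),\bar\eps)$. Since $x$ and $\bar\eps \le \eps$ were arbitrary, the lemma follows.

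The only subtle point — and the one I would write out carefully — is verifying the invariance claims rigorously: that $f(W^s_{\bar\eps}(x)) \subset W^s_{\bar\eps}(f(x))$ (immediate from shifting the index of the defining inequality, using $\N$ rather than $\Z$, so no constant is lost), and that $f$ maps stable sets bijectively onto stable sets (immediate from $d(f^k(f(y)), f^k(f(x))) = d(f^{k+1}(y), f^{k+1}(x)) \to 0 \iff d(f^k(y), f^k(x)) \to 0$). There is no genuine obstacle here; the ``hard part'' is merely bookkeeping to make sure the asymmetry between stable ($k \in \N$) and the two-sided orbit does not cause a loss, and that the finiteness of $n(x,\bar\eps)$ — needed so that statement (2) of Definition~\ref{number} applies — is in force, which is exactly what Remark~\ref{finitenatural} supplies. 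I expect the proof to be essentially one short paragraph once these remarks are in place.
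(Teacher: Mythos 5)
Your proof is correct and follows essentially the same route as the paper: both arguments rest on the observations that $f$ maps $W^s_{\bar\eps}(x)$ injectively into $W^s_{\bar\eps}(f(x))$ and preserves the stable-set equivalence, with Remark~\ref{finitenatural} supplying finiteness. The only cosmetic difference is that the paper builds an explicit injection $E(x,\bar\eps)\to E(f(x),\bar\eps)$ (sending $z$ to the representative whose stable set contains $f(z)$), whereas you use $f(E(x,\bar\eps))$ directly as a witnessing set and invoke condition (2) of Definition~\ref{number} to bound its cardinality — an equivalent, if anything slightly cleaner, packaging of the same idea.
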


\begin{proof}
Since $f$ is finite expansive there exists $\eps>0$ such that for every $x\in X$ the number $n(x,\eps)$ is well defined (see Remark \ref{finitenatural}). For each $x\in X$ and $0<\bar{\eps}\leq\eps$ we consider the sets $E(x,\bar{\eps})$ and $E(f(x),\bar{\eps})$ given by Definition \ref{number}. Note that $n(x,\bar{\eps})$ is exactly the cardinality of the set $E(x,\bar{\eps})$ and that $n(f(x),\bar{\eps})$ is the cardinality of the set $E(f(x),\bar{\eps})$.

We define a map $r:E(x,\bar{\eps})\to E(f(x),\bar{\eps})$ as follows: for any $z\in E(x,\bar{\eps})$ we define $r(z)$ as the point in $E(f(x),\bar{\eps})$ such that $f(z)\in W^s(r(z))$. Note that this map is well defined since $z\in W^s_{\bar{\eps}}(x)$ implies that $f(z)\in W^s_{\bar{\eps}}(f(x))$ and then condition (2) in Definition \ref{number} assures the existence of the point $r(z)$. The map $r$ is injective, since $r(z)=r(w)$ implies that $f(z)\in W^s(f(w))$, which, in turn, implies that $z\in W^s(w)$ and then condition (1) in Definition \ref{number} implies that $z=w$. The map $r$ being injective implies the desired inequality.
\end{proof}

An easy corollary of the previous lemma is the following:

\begin{corollary}
If $f$ is a finite expansive homeomorphism, defined in a compact metric space $X$, then there exists $\eps>0$ such that $$n(x,\bar{\eps})\leq n(f^k(x),\bar{\eps})$$ for every $x\in X$, $0<\bar{\eps}\leq\eps$ and $k\in\N$.
\end{corollary}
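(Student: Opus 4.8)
The plan is to derive this as a straightforward iteration of Lemma~\ref{finite}. That lemma provides an $\eps>0$ such that $n(x,\bar\eps)\le n(f(x),\bar\eps)$ for all $x\in X$ and all $0<\bar\eps\le\eps$; since this holds for \emph{every} point of $X$, I may apply it with $x$ replaced by $f(x)$, then by $f^2(x)$, and so on.

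First I would fix the same $\eps>0$ furnished by Lemma~\ref{finite} (so that $n(\,\cdot\,,\bar\eps)$ is well defined on all of $X$ and the one-step inequality holds) and fix $0<\bar\eps\le\eps$ and $x\in X$. Then I would argue by induction on $k\in\N$. The base case $k=1$ is exactly Lemma~\ref{finite}. For the inductive step, assuming $n(x,\bar\eps)\le n(f^{k}(x),\bar\eps)$, I apply Lemma~\ref{finite} to the point $f^{k}(x)$ to get $n(f^{k}(x),\bar\eps)\le n(f^{k+1}(x),\bar\eps)$, and chain the two inequalities to obtain $n(x,\bar\eps)\le n(f^{k+1}(x),\bar\eps)$. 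This closes the induction and gives the claim for all $k\in\N$ and all $x\in X$, $0<\bar\eps\le\eps$.

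I do not anticipate any genuine obstacle: the only subtlety is making sure the $\eps$ from Lemma~\ref{finite} is uniform over the whole space (which it is, as the lemma is stated with a single $\eps$ valid for every $x$), so that re-applying the lemma along the orbit $x,f(x),f^2(x),\dots$ is legitimate with the same $\eps$. If one wished to be even more economical one could simply remark that the composition of the injections $r\colon E(f^{j}(x),\bar\eps)\to E(f^{j+1}(x),\bar\eps)$ from the proof of Lemma~\ref{finite}, for $j=0,\dots,k-1$, is again an injection $E(x,\bar\eps)\to E(f^{k}(x),\bar\eps)$, which immediately yields the cardinality inequality; but the inductive phrasing is the cleanest write-up.

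\begin{proof}
Let $\eps>0$ be given by Lemma~\ref{finite}, and fix $x\in X$ and $0<\bar\eps\le\eps$. We prove $n(x,\bar\eps)\le n(f^k(x),\bar\eps)$ by induction on $k\in\N$. For $k=1$ this is precisely Lemma~\ref{finite}. Suppose the inequality holds for some $k\in\N$. Applying Lemma~\ref{finite} to the point $f^k(x)$ (which is legitimate, as the $\eps$ produced by that lemma works for every point of $X$) gives $n(f^k(x),\bar\eps)\le n(f^{k+1}(x),\bar\eps)$. Combining this with the inductive hypothesis yields $n(x,\bar\eps)\le n(f^{k+1}(x),\bar\eps)$, which completes the induction.
\end{proof}
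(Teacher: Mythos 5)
Your proof is correct and is exactly the intended argument: the paper states this as ``an easy corollary'' of Lemma~\ref{finite} with no written proof, and iterating the one-step inequality along the orbit (with the single uniform $\eps$ from that lemma) is precisely what is meant. Nothing to add.
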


In the Example of Remark \ref{example} the number of different stable sets is bounded along the orbits. The following question is still unanswered:

\begin{q}
Does there exists a finite expansive homeomorphism, defined in a compact metric space, such that the sequence $(n(f^k(x),\eps))_{k\in\N}$ converges to infinity for some $x\in X$?
\end{q}

\begin{remark}
We note that such example (if it exists) cannot be $n$-expansive for any $n\in\N$ due to Lemma \ref{local}.
\end{remark}

In the $n$-expansive scenario an additional property holds.

\begin{lemma}\label{l}
If $f$ is an $n$-expansive homeomorphism, defined in a compact metric space $X$, then there exist $\eps>0$ such that for every $x\in X$ there is $l(x)\in\N$ satisfying $$n(f^{l(x)}(x),\bar{\eps})=n(f^{l(x)+k}(x),\bar{\eps})$$ for every $0<\bar{\eps}<\eps$ and $k\in\N$.
\end{lemma}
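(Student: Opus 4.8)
\textbf{Proof proposal for Lemma \ref{l}.}

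The plan is to combine the corollary to Lemma \ref{finite} with the uniform bound coming from Lemma \ref{local}. First I would invoke Lemma \ref{local} to obtain a number $\eps>0$ such that $n(y,\bar\eps)\le n$ for every $y\in X$ and every $0<\bar\eps<\eps$; shrinking $\eps$ if necessary, I may also assume (by the corollary preceding this lemma, applied with the finite-expansivity constant, noting that every $n$-expansive homeomorphism is finite expansive) that for every $y\in X$, every $0<\bar\eps\le\eps$ and every $k\in\N$ one has $n(y,\bar\eps)\le n(f^k(y),\bar\eps)$. Now fix $x\in X$ and $0<\bar\eps<\eps$. The sequence $(n(f^k(x),\bar\eps))_{k\in\N_0}$ is a nondecreasing sequence of positive integers bounded above by $n$. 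Hence it is eventually constant: there exists $l=l(x,\bar\eps)\in\N$ such that $n(f^{l}(x),\bar\eps)=n(f^{l+k}(x),\bar\eps)$ for all $k\in\N$.

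The remaining issue is that, as stated, the index $l(x)$ must not depend on $\bar\eps$, only on $x$; so I need a single $l(x)$ that works simultaneously for all $0<\bar\eps<\eps$. To get this I would exploit monotonicity in $\bar\eps$: if $0<\bar\eps_1\le\bar\eps_2<\eps$ then $W^s_{\bar\eps_1}(y)\subset W^s_{\bar\eps_2}(y)$, and a short argument using Definition \ref{number} shows $n(y,\bar\eps_1)\le n(y,\bar\eps_2)$. Thus for each fixed $k$, the function $\bar\eps\mapsto n(f^k(x),\bar\eps)$ is nondecreasing, integer-valued, and bounded by $n$; it therefore takes at most $n$ distinct values on $(0,\eps)$, so there is a finite partition of $(0,\eps)$ into at most $n$ subintervals on each of which $\bar\eps\mapsto n(f^k(x),\bar\eps)$ is constant. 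A cleaner route: pick any decreasing sequence $\bar\eps_i\to 0$ that "realizes" all the jump values, apply the one-parameter argument of the previous paragraph to each $\bar\eps_i$ to get indices $l_i$, and set $l(x)=\max_i l_i$ — but since $\bar\eps\mapsto n(f^k(x),\bar\eps)$ for fixed $k$ takes only finitely many values and, for $\bar\eps$ small enough (below the smallest jump), equals a fixed value $m_k\le n$, one actually only needs finitely many $\bar\eps_i$, so the maximum is over a finite set and $l(x)$ is well defined. Finally, because for each such $\bar\eps_i$ the sequence $(n(f^k(x),\bar\eps_i))_k$ is nondecreasing, once it is constant at index $l_i$ it stays constant at every larger index, so $l(x)=\max_i l_i$ works for every $\bar\eps_i$, hence (by constancy of $\bar\eps\mapsto n(f^k(x),\bar\eps)$ between consecutive $\bar\eps_i$) for every $0<\bar\eps<\eps$.

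The main obstacle I expect is precisely this uniformity in $\bar\eps$: the naive argument gives $l$ depending on both $x$ and $\bar\eps$, and one must verify that $n(f^k(x),\bar\eps)$, as a function of $\bar\eps$ on the interval $(0,\eps)$, is monotone and takes only boundedly many values, so that finitely many "test" values of $\bar\eps$ suffice. I would isolate the monotonicity claim $n(y,\bar\eps_1)\le n(y,\bar\eps_2)$ for $\bar\eps_1\le\bar\eps_2$ as a small preliminary observation, proving it directly from conditions (1) and (2) of Definition \ref{number} (a set $E(y,\bar\eps_1)$ realizing $n(y,\bar\eps_1)$ also sits inside $W^s_{\bar\eps_2}(y)$ with the same separation property, forcing $n(y,\bar\eps_2)\ge n(y,\bar\eps_1)$). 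Once that and the uniform bound $n$ from Lemma \ref{local} are in hand, the rest is the elementary fact that a bounded nondecreasing sequence of integers stabilizes, applied finitely many times and combined via a maximum.
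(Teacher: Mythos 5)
Your core argument is exactly the paper's: by Lemma \ref{finite} the sequence $(n(f^k(x),\bar\eps))_{k}$ is nondecreasing, by Proposition \ref{local} it is bounded above by $n$, hence it stabilizes. (The paper phrases the boundedness step as a contradiction --- if the sequence increased infinitely often it would tend to infinity --- but it is the same observation.) Up to that point the proposal is fine, and your preliminary monotonicity claim $n(y,\bar\eps_1)\le n(y,\bar\eps_2)$ for $\bar\eps_1\le\bar\eps_2$ is correct and is proved exactly the way you indicate, from conditions (1) and (2) of Definition \ref{number}.

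Where the proposal runs into trouble is the part you yourself flag as the main obstacle: making $l(x)$ independent of $\bar\eps$. Your reduction to finitely many test values $\bar\eps_i$ does not work, because the constancy intervals of $\bar\eps\mapsto n(f^k(x),\bar\eps)$ depend on $k$: each such function has at most $n-1$ jumps, but the jump locations can vary with $k$ and accumulate (at $0$, say), so no fixed finite set of test values controls $n(f^k(x),\bar\eps)$ for all $\bar\eps$ and all $k$ simultaneously. Indeed, monotonicity in both variables together with the bound $n$ is not enough in the abstract: the function $F(k,\bar\eps)$ equal to $1$ when $\bar\eps<1/k$ and to $2$ when $\bar\eps\ge 1/k$ is nondecreasing in $k$ and in $\bar\eps$ and bounded by $2$, yet the index at which $(F(k,\bar\eps))_k$ stabilizes tends to infinity as $\bar\eps\to0$. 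So some genuinely dynamical input, beyond monotonicity and the uniform bound, would be needed to obtain an $l(x)$ valid for every $\bar\eps\in(0,\eps)$ at once. To be fair, the paper's own proof does not address this point either --- it fixes $\bar\eps$ at the outset, so the $l(x)$ it produces implicitly depends on $\bar\eps$, even though the statement (and its later use in the proof of Theorem \ref{A}) requires independence --- so you correctly identified a real issue; but your proposed resolution of it is not valid as written.
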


\begin{proof}
Let $\eps>0$ (given by Proposition \ref{local}) be such that $n(x,\eps)\leq n$ for every $x\in X$. If $x\in X$ and $0<\bar{\eps}<\eps$ then there exists $l(x)\in\N$ such that $$n(f^{l(x)}(x),\bar{\eps})\geq n(f^{l(x)+k}(x),\bar{\eps})$$ for every $k\geq 0$. Otherwise, there will be a sequence $(a_k)_{k\in\N}$ of positive integer numbers such that $$n(f^{a_k-1}(x),\bar{\eps})<n(f^{a_k}(x),\bar{\eps})$$ for every $k\in\N$. This implies that the sequence $(n(f^{a_k}(x),\bar{\eps}))_{k\in\N}$ converges to infinity as $k$ goes to infinity, but this contradicts Proposition \ref{local}. The desired equality follows from Lemma \ref{finite}.
\end{proof}



We are finally ready to prove Theorem \ref{A}.

\vspace{+0.3cm}

\hspace{-0.45cm}\emph{Proof of Theorem \ref{A}} : Let $\eps>0$ (given by Lemma \ref{l}) be such that for every $x\in X$ there is $l(x)\in\N$ satisfying $$n(f^{l(x)}(x),\bar{\eps})=n(f^{l(x)+k}(x),\bar{\eps})$$ for every $0<\bar{\eps}<\eps$ and $k\in\N$. For any $x\in X$ let $\eps_x>0$ (given by Proposition \ref{local2}) be such that $$W^s_{\eps_x}(f^{l(x)}(x))\subset W^s(f^{l(x)}(x)).$$ This implies that $$n(f^{l(x)}(x),\eps_x)=1$$ and hence $$n(f^{l(x)+k}(x),\eps_x)=1, \,\,\,\,\,\, k\in\N.$$ Lemma \ref{finite} proves that $n(f^m(x),\eps_x)=1$ for every $m\in\Z$ and this finishes the proof. \qed

\begin{remark}
One can also prove that for every $x$ there exists $\eps_x>0$ such that $$W^u_{\eps_x}(f^m(x))\subset W^u(f^m(x)), \,\,\,\,\,\, m\in\Z.$$
\end{remark}

The following questions seems natural.

\begin{q}
Is the shadowing property necessary in Theorem \ref{A}?
\end{q}

\section{Proof of Theorem \ref{B}}\label{5}

The first statement of Theorem \ref{B} will be proved in a separate proposition since it generalizes Lemma 2.1 in \cite{C}.

\begin{proposition}\label{ShadImpLimit} If an $n$-expansive homeomorphism $f$, defined in a compact metric space $X$, has the shadowing property, then $f$ and $f^{-1}$ have the limit shadowing property.
\end{proposition}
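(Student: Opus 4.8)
The plan is to show that any limit pseudo-orbit of $f$ is limit-shadowed, and then apply the symmetric argument to $f^{-1}$ (which also has the shadowing property, since the shadowing property is preserved under inversion). So let $(x_k)_{k\in\N}$ be a limit pseudo-orbit, i.e. $d(f(x_k),x_{k+1})\to 0$. First I would use the ordinary shadowing property to produce \emph{some} point $y$ that honestly tracks the tail of the pseudo-orbit to within a fixed small $\eps$: for each $j$ pick $\delta_j$ from the shadowing property for accuracy $2^{-j}$, choose $N_j$ so that $d(f(x_k),x_{k+1})<\delta_j$ for all $k\ge N_j$, and splice these together into a genuine $\delta$-pseudo-orbit on all of $\Z$ by prepending a true orbit segment on the negative integers; a diagonal/compactness argument then yields $y$ with $d(f^k(y),x_k)\to 0$ along a subsequence — but more usefully, it yields $y$ with $d(f^k(y),x_k)\le\eps$ for all large $k$, where $\eps$ is as small as we please. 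Actually the cleanest route: fix $\eps>0$ small (to be chosen below via Theorem \ref{A}), take the associated $\delta$, note that the limit pseudo-orbit is \emph{eventually} a $\delta$-pseudo-orbit, extend it backwards along a real orbit to a two-sided $\delta$-pseudo-orbit, and shadow it by some $y$, so that $d(f^k(y),x_k)<\eps$ for all $k\ge$ some $K$.

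The point is now to upgrade ``$d(f^k(y),x_k)\le\eps$ eventually'' to ``$d(f^k(y),x_k)\to 0$.'' Here I would invoke Theorem \ref{A}: for each $x\in X$ there is $\eps_x>0$ with $W^s_{\eps_x}(f^m(x))\subset W^s(f^m(x))$ for all $m\in\Z$. The key step is a compactness argument to get a \emph{uniform} version of this. Consider $x=x_0$ — or rather, since we want the shadowing accuracy chosen in advance, I would first shadow at accuracy $\eps$, obtain $y$ with $f^K(y)$ close to $x_K=f^K$-image; but $x_k$ need not be $f^k(x_0)$. The right formulation: take $z=f^K(y)$ and $w=x_K$; then for all $k\ge 0$, $d(f^k(z),f^k(w'))\le$ something... — but $(x_k)_{k\ge K}$ is still only a limit pseudo-orbit, not an orbit, so $w$ alone does not determine it. To handle this I would instead shadow the \emph{tail} of the pseudo-orbit a second time with ever-decreasing accuracy: for each $j$ there is $y_j$ with $d(f^k(y_j),x_k)<2^{-j}$ for all $k\ge K_j$. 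Taking $j$ large enough that $2^{-j}<\eps_{*}$ for the relevant uniform local-stable-set radius, one gets that $y$ (the first, $\eps$-accurate shadowing point) and $f^{K_j-K}(y_j)$, suitably compared, lie in each other's $\eps_*$-stable sets, and then $W^s_{\eps_*}\subset W^s$ forces $d(f^k(y),x_k)\to 0$.

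The main obstacle — and the place where $n$-expansiveness (via Theorem \ref{A}) really enters — is exactly this uniformity: Theorem \ref{A} gives $\eps_x$ depending on $x$, with no lower bound, so a naive argument at a single point fails, and one must either (i) extract, by compactness of $X$, a finite cover argument showing that along the specific orbit being shadowed the radii $\eps_{f^m(x)}$ can be taken bounded below (this is essentially the content of Lemma \ref{l} and the proof of Theorem \ref{A}, where $n(f^m(x),\eps_x)=1$ is propagated along the whole orbit), or (ii) argue directly that two points which $\eps$-shadow overlapping tails of the same limit pseudo-orbit must be asymptotic, by feeding their difference into the $n$-expansivity constant $c$ and using that infinitely many ``large separations'' would let us build, via shadowing, $n+1$ points in a common $\Gamma(\cdot,c)$ — the same mechanism as in the proof of Proposition \ref{local}. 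I expect the cleanest writeup uses route (i): fix $x$, use Theorem \ref{A} to get $\eps_x$ with $W^s_{\eps_x}(f^m(x))\subset W^s(f^m(x))$ for every $m$, shadow the limit pseudo-orbit at accuracy $\eps_x/2$ to get $y$, and then observe that for $k$ large, $f^k(y)$ and the ``orbit guess'' coming from a finer shadowing both sit in $W^s_{\eps_x}(f^k(x))$-type sets, whence asymptoticity; the two-sided statement for $f^{-1}$ is then immediate by symmetry. This last comparison is the routine-but-delicate part and I would not grind through the triangle inequalities here.
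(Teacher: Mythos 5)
Your first half is fine and matches the paper: shadow the tail of the limit pseudo-orbit at accuracies $\eps/(j+1)$ (after extending each tail backwards along a true orbit to get a two-sided pseudo-orbit), obtaining points $y_j$ with $d(f^k(y_j),x_k)<\eps/(j+1)$ for $k\ge k_j$. The gap is in the upgrade step, and neither of your two proposed routes closes it. Route (i) is circular in exactly the way you yourself flag: the radius $\eps_x$ from Theorem \ref{A} must be attached to some point, and the only candidates are the shadowing points themselves, which are produced only \emph{after} an accuracy has been fixed; there is no a priori lower bound on $\eps_{y_1}$ in terms of the accuracy used to produce $y_1$. Worse, route (i) aims to show that the first shadowing point $y$ itself limit-shadows, and this is false in general: in the example of Theorem \ref{Example}, the orbit of $p_k$ is a limit pseudo-orbit which is $\tfrac1k$-shadowed by $q(i,k,0)$, a point that stays at distance exactly $\tfrac1k$ forever and never limit-shadows. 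Route (ii) asserts that \emph{two} points $\eps$-shadowing overlapping tails must be asymptotic; the same example refutes this ($p_k$ and $q(i,k,0)$), and the $n$-expansivity mechanism you invoke only yields a contradiction from $n+1$ pairwise non-asymptotic points, not from $2$ of them (so your route (ii) proves only the expansive case $n=1$).

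What is actually needed, and what the paper does, is a pigeonhole over stable classes rather than a two-point comparison. Proposition \ref{local} gives a \emph{uniform} radius $\eps=c/4$ (depending only on the expansivity constant, so no circularity) such that every $W^s_{\eps}(\cdot)$ meets at most $n$ distinct stable sets, and Lemma \ref{l} makes this count stabilize along the forward orbit of $y_1$, so that the representatives of the classes in $W^s_\eps(f^{k_j}(y_1))$ can all be taken of the form $f^{k_j}(p_i)$ for finitely many fixed points $p_1,\dots,p_{n(f^{l(y_1)}(y_1),\eps)}$. Since $f^{k_j}(y_j)\in W^s_\eps(f^{k_j}(y_1))$ for all $j$, infinitely many of the $y_j$ fall into the stable set of one and the same $p_i$, and it is this $p_i$ --- generally not $y_1$ --- that limit-shadows $(x_k)$: for each $\alpha>0$ pick $m$ with $\eps/(j_m+1)<\alpha/2$ and use $y_{j_m}\in W^s(p_i)$ together with $d(f^k(y_{j_m}),x_k)<\alpha/2$ for $k\ge k_{j_m}$. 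Without this "at most $n$ classes, hence one class recurs infinitely often" step, the argument does not go through for $n\ge 2$.
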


\begin{proof} We must prove that any limit pseudo orbit $(x_k)_{k\in\mathbb{N}}$ of $f$ is limit shadowed. Let $\eps>0$ (given by Proposition \ref{local}) be such that $$n(x,\bar{\eps})\leq n$$ for every $x\in X$ and $0<\bar{\eps}\leq\eps$. For each $j\in\N$, the shadowing property assures the existence of a number $\delta_j>0$ such that every $\delta_j$-pseudo orbit is $\frac{\eps}{j+1}$-shadowed. Choose an increasing sequence $(k_j)_{j\in\mathbb{N}}$ of positive integer numbers such that $$d(f(x_k),x_{k+1})<\delta_j, \,\,\,\,\,\, k\geq k_j.$$ For each $j>1$ we define a sequence $(x^j_k)_{k\in\mathbb{Z}}$ by $$x_k^j=\begin{cases}x_{k+k_j}, & k>0 \\
f^k(x_{k_j}), & k\leq0.
\end{cases}$$
This sequence is a $\delta_j$-pseudo orbit of $f$, so the shadowing property assures the existence of $z_j\in X$ such that $$d(f^k(z_j),x^j_k)<\frac{\eps}{j+1}, \,\,\,\,\,\, k\in\Z.$$ For each $j\in\N$, let $$y_j=f^{-k_j}(z_j).$$ Let $l(y_1)\in\N$ (given by Lemma \ref{l}) be such that $$n(f^{l(y_1)}(y_1),\bar{\eps})=n(f^{l(y_1)+k}(y_1),\bar{\eps})$$ for every $0<\bar{\eps}<\eps$ and $k\in\N$. We will prove that one point of the set $$f^{-l(y_1)}(E(f^{l(y_1)}(y_1),\eps))$$ limit shadows $(x_k)_{k\in\N}$. This set has exactly $n(f^l(y_1),\eps)$ elements, so we can write $$f^{-l(y_1)}(E(f^{l(y_1)}(y_1),\eps))=\{p_i\in X\,\,;\,\,i\in\{1,\dots,n(f^l(y_1),\eps)\}.$$ Note that if $j\in\N$ and $k\geq k_j$ then
\begin{eqnarray*}
d(f^k(y_j),f^k(y_1))&\leq&d(f^k(y_j),x_k)+d(x_k,f^k(y_1))\\
&<&\frac{\eps}{j+1}+\frac{\eps}{2}\\
&<&\eps.
\end{eqnarray*}
This implies that $$f^{k_j}(y_j)\in W^s_{\eps}(f^{k_j}(y_1)), \,\,\,\,\,\, j\in\N.$$ So, for each $j\in\N$, there is $$w_j\in E(f^{k_j}(y_1),\eps)$$ such that $$f^{k_j}(y_j)\in W^s(w_j).$$ We can suppose $k_2\geq l(y_1)$, so that for each $j\geq2$ there is $$i\in\{1,\dots,n(f^l(y_1),\eps)\}$$ such that $$w_j=f^{k_j}(p_i).$$ Hence, there is $i\in\{1,\dots,n(f^l(y_1),\eps)\}$ and a subsequence $(k_{j_m})_{m\in\N}$ of $(k_j)_{j\in\N}$ such that $$w_m=f^{k_{j_m}}(p_i), \,\,\,\,\,\, m\in\N.$$ We claim that $p_i$ limit shadows $(x_k)_{k\in\N}$. Indeed, since $$f^{k_{j_m}}(y_{j_m})\in W^s(f^{k_{j_m}}(p_i)), \,\,\,\,\,\, m\in\N,$$ it is easily seen that $$y_{j_m}\in W^s(p_i), \,\,\,\,\,\, m\in\N.$$ For each $\alpha>0$, choose $m\in\N$ satisfying $$\frac{\eps}{m+1}<\frac{\alpha}{2}.$$ Thus, if $k\geq k_{j_m}$ then $$d(f^k(y_{j_m}),x_k)<\frac{\alpha}{2}.$$ Since $$y_{j_m}\in W^s(p_i)$$ we can choose $K\geq k_{j_m}$ such that $$d(f^k(y_{j_m}),f^k(p_i))\leq\frac{\alpha}{2}, \,\,\,\,\,\, k\geq K.$$ Then for every $k\geq K$ we have
\begin{eqnarray*}
d(f^k(p_i),x_k)&\leq&d(f^k(p_i),f^k(y_{j_m}))+d(f^k(y_{j_m}),x_k)\\
&<&\frac{\alpha}{2}+\frac{\alpha}{2}\\
&<&\alpha.
\end{eqnarray*}
Since this can be done for any $\alpha>0$ we proved that $p_i$ limit shadows $(x_k)_{k\in\N}$. This proves that $f$ has the limit shadowing property. To prove that $f^{-1}$ also has it one just have to note that $f^{-1}$ is $n$-expansive and has the shadowing property and apply above argument.
\end{proof}

\hspace{-0.45cm}\emph{Proof of Theorem \ref{B}} : 
We will follow the proof of the expansive case in \cite{C} exchanging the number $\eps>0$, given by the expansiveness property, by Theorem \ref{A}. We assume that $f$ is a topologically mixing $n$-expansive homeomorphism with the shadowing property and we prove that $f$ has the two-sided limit shadowing property. Let $\{x_k\}_{k\in\mathbb{Z}}$ be a two-sided limit pseudo orbit of $f$. Since $f$ and $f^{-1}$ have the limit shadowing property (Proposition \ref{ShadImpLimit}) there exist $p_1,p_2\in X$ satisfying $$d(f^k(p_1),x_k)\to0, \,\,\,\,\,\, k\to-\infty,$$ and $$d(f^k(p_2),x_k)\to0, \,\,\,\,\,\, k\to\infty.$$ Let $\eps_1>0$ and $\eps_2>0$ (given by Theorem \ref{A}) be such that $$W^u_{\eps_1}(f^k(p_1))\subset W^u(f^k(p_1)),$$ and $$W^s_{\eps_2}(f^k(p_2))\subset W^s(f^k(p_2))$$ for every $k\in\mathbb{Z}$. Let $$\eps=\min\{\eps_1,\eps_2\}$$ and choose $\delta>0$ (given by the shadowing property) such that every $\delta$-pseudo orbit of $f$ is $\eps$-shadowed. Since $f$ is topologically mixing and has the shadowing property, it has the specification property. Hence, there is $M\in\mathbb{N}$ such that every $M$-spaced specification is $\delta$-shadowed. Choose $N\in\N$ such that $2N\geq M$ and that for every $k\geq N$ the following holds $$d(f^{-k}(p_1),x_{-k})<\delta \,\,\,\,\,\, \textrm{and} \,\,\,\,\,\, d(f^k(p_2),x_k)<\delta.$$

Let $I_1=\{-N\}$, $I_2=\{N\}$, $P(-N)=f^{-N}(p_1)$ and $P(N)=f^N(p_2)$. Since $(\{I_1,I_2\};P)$ is a $M$-spaced specification, there is $z\in X$ satisfying
$$d(f^{-N}(z),f^{-N}(p_1))=d(f^{-N}(z),P(-N))<\delta$$ and $$d(f^N(z),f^N(p_2))=d(f^N(z),P(N))<\delta.$$ This implies that the sequence $(y_k)_{k\in\Z}$
defined by
$$y_k=\begin{cases}
f^k(p_1), & k<-N\\
f^k(z), &-N\leq k\leq N\\
f^k(p_2), &k> N
\end{cases}
$$
is a $\delta$-pseudo orbit of $f$. Then there is $\tilde{z}\in X$ that $\eps$-shadows it. In particular, $$d(f^k(\tilde{z}),f^k(p_1))<\eps,
\,\,\,\,\,\, k\leq-N$$ and $$d(f^k(\tilde{z}),f^k(p_2))<\eps, \,\,\,\,\,\, k\geq N.$$ This implies that $$f^{-N}(\tilde{z})\in W^u_{\eps}(f^{-N}(p_1))\subset W^u(f^{-N}(p_1))$$ and that $$f^N(\tilde{z})\in W^s_{\eps}(p_2)\subset W^s(f^N(p_2)).$$ Thus we obtain
$$d(f^k(\tilde{z}),f^k(p_1))\rightarrow0, \,\,\,\,\,\, k\rightarrow-\infty$$ and $$d(f^k(\tilde{z}),f^k(p_2))\rightarrow0, \,\,\,\,\,\,
k\rightarrow\infty.$$ Since $p_1$ limit-shadows in the past $(x_k)_{k\in-\N_0}$ and $p_2$ limit-shadows $(x_k)_{k\in\N_0}$ it follows that $\tilde{z}$
two-sided limit shadows $(x_k)_{k\in\Z}$. This finishes the proof. \qed

\ \\

\end{document}